\newtheorem{theorem}{Theorem}[section]
\newtheorem{lemma}[theorem]{Lemma}
\theoremstyle{definition}
\newtheorem{definition}[theorem]{Definition}
\newtheorem{remark}[theorem]{Remark}
\newcommand{\what}{\widehat}
\newcommand{\R}{\mathbb R}%
\newcommand{\C}{\mathbb C}%
\newcommand{\Z}{\mathbb Z}%
\newcommand{\N}{\mathbb N}%
\newcommand{\Sb}{\mathbb S}%
\newcommand{\Ac}{\mathcal A}%
\newcommand{\T}{\mathbb T}%
\newcommand{\X}{\mathbb X}%
\newcommand{\Hb}{\mathbb H}%
\newcommand{\sa}{\textsf{a}}
\numberwithin{equation}{section}
\renewcommand\subsubsection{\@secnumfont}{\bfseries}%
\renewcommand\subsubsection{\@startsection{subsubsection}{3}
  \z@{.5\linespacing\@plus.7\linespacing}{-.5em}%
  {\normalfont\bfseries}}
\begin{document}

\title[Regularity and pointwise convergence]{Regularity and pointwise convergence for dispersive equations on Riemannian symmetric spaces of compact type }

\author[U. Dewan]{Utsav Dewan}
\address{Department of Mathematics, Indian Institute of Technology Bombay, Powai, Mumbai-400076, India}
\email{utsav@math.iitb.ac.in}

\author[S. Pusti]{Sanjoy Pusti}
\address{Department of Mathematics, Indian Institute of Technology Bombay, Powai, Mumbai-400076, India}
\email{sanjoy@math.iitb.ac.in}

\subjclass[2020]{Primary 43A85, 22E30; Secondary 35J10, 11N56}

\keywords{Pointwise convergence, Dispersive equations, Riemannian symmetric spaces of compact type.}

\begin{abstract}
In this article, we first prove that for general dispersive equations on Riemannian symmetric spaces of compact type $\mathbb{X}=U/K$, of rank $1$ and $2$, the Sobolev regularity thresholds for the initial data, $\alpha >1/2$ and $\alpha >1$ respectively, are sufficient to obtain pointwise convergence of the solution a.e. on $\mathbb{X}$. We next focus on $K$-biinvariant initial data for rank $1$ and prove that the sufficiency of the regularity threshold can be improved down to $\alpha>1/3$, whereas the phenomenon fails for $\alpha<1/4$ for the Schr\"odinger equation. We also obtain the same results for other dispersive equations: the Boussinesq equation and the Beam equation, also known as the fourth order Wave equation, by a novel transference principle, which seems to be new even for the circle $\mathbb{T} \cong SO(2)$ and may be of independent interest. Our arguments involve harmonic analysis arising from the representation theory of compact semi-simple Lie groups and also number theory.
\end{abstract}

\maketitle
\tableofcontents

\section{Introduction}
One of the most celebrated problems in Euclidean Harmonic analysis is the Carleson's problem: determining the optimal regularity of the initial condition $f$ of the
Schr\"odinger equation given by
\begin{equation} \label{schrodinger}
\begin{cases}
	 i\frac{\partial u}{\partial t} +\Delta_{\R^d} u=0\:,\:\:\:  &(x,t) \in \R^d \times \R\:, \\
	u(\cdot, 0)=f\:, &\text{ on } \R^d \:,
	\end{cases}
\end{equation}
in terms of the index $\alpha$ such that for all $f$ belonging to the inhomogeneous Sobolev space $H^\alpha(\R^d)$, the solution $u$ of (\ref{schrodinger}) converges pointwise to $f$ a.e., that is, 
\begin{equation} \label{pointwise_convergence}
\displaystyle\lim_{t \to 0+} u(x,t)=f(x)\:,\:\:\text{ almost everywhere }.
\end{equation}
In 1980, Carleson studied this problem for $d=1$ in \cite{C} and obtained the sufficient condition $\alpha \ge 1/4$ for the pointwise convergence (\ref{pointwise_convergence}) to hold. In 1982, Dahlberg-Kenig \cite{DK} showed that $\alpha \ge 1/4$ is also necessary. This completely solved the problem in dimension one and subsequently, posed the question in higher dimensions.

\medskip

In 1983, Cowling \cite{Cowling} studied the problem for a general class of self-adjoint operators on $L^2(X)$ for a measure space $X$  and obtained $\alpha > 1$, to be a sufficient condition for the associated Schr\"odinger operator. For $\R^d$, in 1987, Sj\"olin \cite{Sjolin} improved the bound down to $\alpha > 1/2$, by means of a local smoothing effect. This improvement was also independently obtained by Vega \cite{Vega} in 1988. Then developing decoupling techniques, continuous improvements were made by several mathematicians \cite{Bourgain1, MVV, Tao, Lee}. These attempts reached fruition recently in 2017, when Du-Guth-Li \cite{DGL} obtained the bound $\alpha > 1/3$, for $\R^2$ by using polynomial partitioning and decoupling. Then in 2019, Du-Zhang \cite{DZ} obtained the bound $\alpha > d/2(d+1)$, for dimensions $d \ge 3$, by using decoupling and induction on scales. This bound is sharp except at the endpoint $\alpha =d/2(d+1)$, due to a counterexample by Bourgain \cite{Bourgain}. Thus in the setting of $\R^d$, the Carleson's problem has been almost fully resolved.

\medskip

In the case of periodic initial data, that is on $\T^d$, much less is known however. In dimension one, employing Strichartz estimates, Moyua-Vega showed in \cite{MV} that the bound $\alpha > 1/3$ is sufficient, whereas for $\alpha <1/4$ the pointwise convergence fails. For $d \ge 2$, extending the above ideas along with the improved Strichartz estimates obtained in \cite{BD}, Compaan-Luc\`a-Staffilani showed in \cite{CLS} that the bound $\alpha>d/(d+2)$ is sufficient, whereas for $\alpha <d/2(d+1)$ the pointwise convergence fails.

\medskip 

For compact non-Euclidean Riemannian manifolds even less is known. In \cite{WZ}, Wang-Zhang showed that on the unit sphere $\Sb^d$, the bound $\alpha>1/2$ is sufficient. Moreover, for lower dimensional connected, compact Riemannian manifolds without boundary, they proved the sufficiency of the bound
\begin{equation} \label{general_manifold}
\alpha > 
\begin{cases}
	 \frac{3}{4} &\text{ if } d=2 \\
	\frac{9}{10} &\text{ if } d=3\:.
	\end{cases}
\end{equation}
In addition, if all geodesics are closed with a common period, that is, on a Zoll manifold, they could improve (\ref{general_manifold}) down to
\begin{equation} \label{zoll_manifold}
\alpha > 
\begin{cases}
	 \frac{5}{8} &\text{ if } d=2 \\
	\frac{3}{4} &\text{ if } d=3\:.
	\end{cases}
\end{equation}

The most well-known examples of Zoll manifolds are given by the class of rank one Riemannian symmetric spaces of compact type. Riemannian symmetric spaces of compact type are homogeneous spaces $\X=U/K$, where $U$ is a connected, simply connected, compact, semi-simple Lie group and $K$ is a closed subgroup with the property that $U^\theta_0 \subset K \subset U^\theta$ for an involution $\theta$ of $U$. Here $U^\theta$ denotes the subgroup of $\theta$-fixed points and $U^\theta_0$ its identity component. In the special case when rank$(\X)=1$, one has a well-known classification of such spaces \cite{Wang}: 
\begin{itemize}
\item the sphere $\Sb^d =SO(d+1)/SO(d)$, $d=1,2,3,\dots$;
\item the real projective space $P^d(\R)=SO(d+1)/O(d)$, $d=2,3,4,\dots$;
\item the complex projective space $P^d(\C)=SU(l+1)/S(U(l) \times U(1))$, $d=4,6,8,\dots$ and $l=d/2$;
\item the quarternionic projective space $P^d(\Hb)=Sp(l+1)/Sp(l) \times Sp(1)$, $d=8,12,16,\dots$ and $l=d/4$;
\item the Cayley projective plane $P^{16}(Cay)$.
\end{itemize}
Here $d$ denotes the real dimension of any of these spaces, $O(d),\:U(d),\:Sp(d)$ denote the orthogonal, unitary and symplectic groups of order $d$ and $S(\cdot)$ denotes the formation of a subgroup of matrices of unit determinant. The Riemannian volume measure on $\X$ is induced by the normalized Haar measure on $U$. For the notion of the rank of a symmetric space and other unexplained terminologies, we refer the reader to Section $2$.
 
\medskip
 
In this article, in the setting of Riemannian symmetric spaces of compact type $\X$, we will study the pointwise convergence of solutions of dispersive equations 
\begin{equation} \label{dispersive}
\begin{cases}
	 i\frac{\partial u}{\partial t} -\psi\left(\sqrt{-\Delta}\right) u=0\:,\:\:\:  &(x,t) \in \X \times [0,2\pi)\:, \\
	u(\cdot, 0)=f\:, &\text{ on } \X \:,
	\end{cases}
\end{equation}
where $\psi: [0,\infty) \to \R$ satisfies suitable conditions and $\Delta$ is the Laplace-Beltrami operator on $\X$ with non-positive, discrete $L^2$-spectrum. We note that for $\psi(r)=r^a$ with $a>1$, we recover the fractional Schr\"odinger equations with convex phase and moreover, $a=2$ corresponds to the Schr\"odinger equation. Some other examples are given by $\psi(r):= r \sqrt{1+r^2}$ and $\psi(r):= \sqrt{1+r^4}$ corresponding respectively to the Boussinesq equation and the Beam equation, also known as the fourth order Wave equation (for more details see \cite{FG,GPW}), which often appear in Mathematical Physics.

\medskip

To quantify the regularity of the initial data, we consider the inhomogeneous Sobolev spaces:
\begin{equation} \label{Sobolev_space}
H^\alpha\left(\X\right):=\left\{f \in L^2 \left(\X\right) \mid \left(I-\Delta\right)^{\frac{\alpha}{2}}f \in L^2 \left(\X\right) \right\}\:,\:\alpha \ge 0\:.
\end{equation}
Our first result yields a sufficiency bound on the regularity of the initial data for general dispersive equations:
\begin{theorem} \label{result1}
Let $\X$ be a Riemannian symmetric space of compact type of rank $1$ or $2$. For dispersive equations (\ref{dispersive}), the pointwise convergence of the solution to its initial data,
\begin{equation*}
\displaystyle\lim_{t \to 0+} u(x,t)=f(x)\:,
\end{equation*}
holds for almost every $x \in \X$, whenever $f \in H^\alpha(\X)$ with 
\begin{equation*} 
\alpha > \begin{cases}
	 \frac{1}{2}  &\text{ if } rank(\X)=1\:, \\
	 1  &\text{ if } rank(\X)=2\:.
	\end{cases}
\end{equation*}
\end{theorem} 

Some remarks are now in order :
\begin{remark} \label{remark_result1}
In the setting of Riemannian symmetric spaces of compact type of rank one,
\begin{enumerate}
\item for the Schr\"odinger equation,  Theorem \ref{result1} improves the best known bound until now, $\alpha > 1$ (due to Cowling \cite{Cowling}) down to $\alpha > 1/2$ and also the bounds for lower dimensional Zoll manifolds given by (\ref{zoll_manifold}) \cite[Theorem 6.5]{WZ}; 

\item for the fractional Schr\"odinger equations with convex phase $\psi(r)=r^a$, $a>1$, Theorem \ref{result1} improves the best known bound until now, $\alpha > a/2$ (due to Cowling \cite{Cowling}) down to $\alpha > 1/2$.
\end{enumerate}
\end{remark}

\begin{remark} \label{remark_rank2}
It is noteworthy to mention that Theorem \ref{result1} is the only result apart from Cowling's abstract result \cite{Cowling} on Riemannian symmetric spaces of rank $>1$, be it compact or non-compact type. In fact, for Riemannian symmetric spaces of compact type of rank two, for the fractional Schr\"odinger equations with convex phase $\psi(r)=r^a$, $a>2$, Theorem \ref{result1} improves the best known bound until now, $\alpha > a/2$ (due to Cowling \cite{Cowling}) down to $\alpha > 1$.
\end{remark}

\begin{remark} \label{remark_result1_key_ideas}
In the proof of Theorem \ref{result1}, the key idea is to utilize the concentration of the discrete $L^2$ spectrum of $\Delta$. In the rank one case, this follows just from the identification of the highest weight lattice as the set of non-negative integers and the growth of the eigenvalues (see Remark \ref{rank1_eigenvalue_remark}). In the rank two case, however, similar growth estimates require more work (see Lemma \ref{rank2_eigenvalue}) and  we also need the asymptotics on the number of lattice points in annuli, by appealing to the Gauss circle problem (see Lemma \ref{lattice_point_counting}).
\end{remark}

Any element of $L^2(\X)$ is naturally identified with an element of $L^2(U)$ which is right $K$-invariant. We now focus our attention to more symmetric initial data, namely elements of $L^2(U)$ which are $K$-biinvariant, that is, both left and right $K$-invariant. In the special case when rank$(\X)=1$, the algebraic notion of $K$-biinvariance has an interesting connection with the geometry of the two-point homogeneous spaces $\X$. For a fixed origin $o \in \X$, $U$ can be identified as the maximal connected group of isometries of $\X$ and $K$ as the stabilizer of $o$ in $U$. Moreover, by the two-point homogenity, $K$ fixes $o$ and acts transitively on the set of points at a given distance from $o$. Thus $K$-biinvariant functions on $U$ can be identified with functions on $\X$ radial around $o$, that is functions $f$ on $\X$ such that $f(x)$ only depends on the geodesic distance of $x$ from $o$. Henceforth, we will refer to such functions simply as radial functions.

\medskip

Due to the extra symmetry, one may expect an improvement of the sufficiency bound while specializing to $K$-biinvariant initial data, compared to the general case treated in Theorem \ref{result1}. This was recently studied for the Schr\"odinger equation on the sphere $\Sb^d$ by Chen-Duong-Lee-Yan, where they proved the sufficiency of the bound $\alpha>1/3$ for such functions \cite[Theorem 1.3]{CDLY}. Our next result generalizes their result to all rank one Riemannian symmetric spaces of compact type not only for the Schr\"odinger equation but also for other PDEs such as the Boussinesq equation and the Beam equation:
\begin{theorem} \label{result2}
Let $\X$ be a rank one Riemannian symmetric space of compact type. For the Schr\"odinger equation, the Boussinesq equation and the Beam equation, the pointwise convergence of the solution to its initial data,
\begin{equation*}
\displaystyle\lim_{t \to 0+} u(x,t)=f(x)\:,
\end{equation*}
holds for almost every $x \in \X$, whenever $f \in H^\alpha(\X)$ is $K$-biinvariant with $\alpha > 1/3$.
\end{theorem}

Theorems \ref{result1} and \ref{result2} yield sufficient conditions on the regularity threshold to guarantee pointwise convergence almost everywhere, which by standard arguments (for instance see the proof of Theorem 5 of \cite{Sjolin}) boils down to obtaining $H^\alpha(\X) \to L^p(\X)$ (for some $p \ge 1$) boundedness results for the maximal function:
\begin{equation} \label{boundedness}
\left\|\sup_{0 \le t < 2\pi} \left|e^{-it\psi\left(\sqrt{-\Delta}\right)} f \right|\right\|_{L^p(\X)} \le C \|f\|_{H^\alpha(\X)}\:,
\end{equation}
for some $C>0$. Hence to put the above results in the retrospect of the Carleson's problem, we present the following negative result which again generalizes the related result for the Schr\"odinger equation on the sphere $\Sb^d$ \cite[Theorem 1.3]{CDLY} by Chen et. al.: 
\begin{theorem} \label{result3}
Let $\X=\Sb^d,\:P^d(\R)$ ($d$ odd), $P^d(\C),\:P^d(\Hb)$ or $P^{16}(Cay)$. Then for the Schr\"odinger equation, the Boussinesq equation and the Beam equation, the maximal estimate (\ref{boundedness}) fails for (any $p \ge 1$) the class of $K$-biinvariant functions in $H^\alpha(\X)$ if $\alpha <1/4$.
\end{theorem}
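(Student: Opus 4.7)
The plan is to prove failure by an explicit counterexample construction, extending the sphere case of Chen-Duong-Lee-Yan \cite{CDLY} to the remaining spaces in the list and to the Boussinesq and Beam equations. The starting point is the spherical Fourier expansion: every $K$-biinvariant $f \in L^2(\X)$ admits $f = \sum_{n \ge 0} \hat f(n)\,\Phi_n$, where $\Phi_n$ is the $n$-th zonal spherical function (a Jacobi polynomial in $\cos r$ with parameters depending on $\X$), and $-\Delta \Phi_n = \lambda_n \Phi_n$ with the explicit quadratic eigenvalue $\lambda_n = n(n + c_\X)$ for some $c_\X > 0$. The exclusion of $P^d(\R)$ from the statement is explained by the fact that its $K$-biinvariant spectrum sits on the even integers, which breaks the quadratic Gauss-sum arithmetic exploited below.

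For the Schr\"odinger equation, the plan is to build $f_N$ with spherical Fourier support in a window $n \in [N, 2N]$ and carefully normalized coefficients. At a rational time $t_N = 2\pi p/q$ with $q$ tuned to $N$, the classical evaluation $\left|\sum_{j=0}^{q-1} e^{2\pi i j^2/q}\right| = \sqrt{q}$ (for $q$ of appropriate residue mod $4$) reduces $u_N(o, t_N) = \sum_n c_n\,e^{-it_N \lambda_n}\,\Phi_n(o)$ to a quadratic Gauss sum of known modulus, yielding $|u_N(o, t_N)| \gtrsim \sqrt{N}$ in the normalization $\Phi_n(o) = 1$. Propagating this pointwise lower bound to a set of $x$'s of controlled measure via two-point homogeneity and the continuity of $\Phi_n$ near $o$, and comparing against $\|f_N\|_{H^\alpha} \sim N^{1/2+2\alpha}$, yields a contradiction with (\ref{boundedness}) whenever $\alpha < 1/4$.

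For the Boussinesq and Beam equations, the next step is a transference argument. The dispersion relations
\[
\psi_{\mathrm{Bouss}}\!\left(\sqrt{\lambda_n}\right) = \sqrt{\lambda_n(1+\lambda_n)} = \lambda_n + \tfrac12 + O(\lambda_n^{-1}), \qquad \psi_{\mathrm{Beam}}\!\left(\sqrt{\lambda_n}\right) = \sqrt{1+\lambda_n^2} = \lambda_n + O(\lambda_n^{-1}),
\]
differ from the Schr\"odinger phase $\lambda_n$ by an $O(1)$ additive constant plus a negligible error. At time $t_N \sim 1/N$ the error contributes a phase of size $t_N \cdot O(\lambda_n^{-1}) = O(N^{-3})$, which sums to $O(N^{-2})$ over $n \in [N,2N]$, so the Boussinesq/Beam exponential sum equals the Schr\"odinger one times a unimodular factor modulo a vanishing correction. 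The Gauss-sum lower bound thus persists, and the same counterexample rules out $\alpha < 1/4$ for these equations as well. This is the transference principle advertised in the abstract.

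The hard part will be the propagation step in the second paragraph: extending $|u_N(o, t_N)| \gtrsim \sqrt{N}$ to a set of $x$'s of sufficient measure to saturate the $L^p$ maximal estimate. For each of the four non-sphere cases ($P^d(\C)$, $P^d(\Hb)$, $P^{16}(Cay)$), this requires a careful analysis of the zonal Jacobi polynomials near the pole and of rational orbits under a maximal torus of $K$. The mechanism is uniform across the list, but the Jacobi parameters and representation dimensions must be tracked case-by-case to recover the sharp exponent $1/4$.
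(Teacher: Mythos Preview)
Your propagation step is where the argument breaks. Evaluating the solution at the pole $o$ and then invoking continuity of $\Phi_n$ only controls the sum on the ball $\{d(x,o)\lesssim 1/N\}$, which has Riemannian measure $\asymp N^{-d}$. Two-point homogeneity does not help: $K$ fixes $o$, its orbits are geodesic spheres about $o$, and a $K$-biinvariant function is already constant on those spheres, so ``rational orbits under a maximal torus of $K$'' cannot enlarge the set. With the bound $\sup_t|u_N|\gtrsim N^{1/2}$ (or even $N$, using $t=0$) on a set of measure $N^{-d}$, the resulting lower bound on $\|\sup_t|u_N|\|_{L^p}$ is far too weak to force $\alpha\ge 1/4$; in fact the ratio against $\|f_N\|_{H^\alpha}$ tends to $0$ for every $\alpha>0$ once $d\ge 2$.

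The paper's argument avoids the pole entirely. One works in the interior $\theta\in[\varepsilon,\pi-\varepsilon]$, where the Szeg\H{o} asymptotics for Jacobi polynomials give $\tilde Z_n(\theta)=c_n\,(\sin\tfrac\theta2)^{-\sigma-\frac12}(\cos\tfrac\theta2)^{-\tau-\frac12}\cos(n\theta+\kappa)+O(1/n)$. The counterexample $f_N=\sum_{n<N}\tilde Z_n/c_n$ then reduces to an exponential sum in the \emph{two} variables $(t,\theta)$. Choosing $t=2\pi/q$ and $\theta=2\pi p/q+\xi$ with $q$ odd, $\sqrt N\le q\le 2\sqrt N$, $p$ even, $|\xi|\le \pi/(8N)$, the sum factors as a complete quadratic Gauss sum of size $\sqrt q$ times a geometric-type sum of $\lfloor N/q\rfloor$ terms whose cosines keep a fixed sign (here the parity of $d$ for $P^d(\C),P^d(\Hb),P^{16}(Cay)$ is used), yielding $|S|\gtrsim N/\sqrt q\asymp N^{3/4}$. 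The crucial counting Lemma~\ref{counting_lemma} then shows that the union $E_N$ of these $\theta$-intervals has measure bounded below \emph{independently of $N$}, which is exactly what your continuity argument cannot provide. From $\|\sup_t|e^{it\Delta}f_N|\|_{L^1(\X)}\gtrsim N^{3/4}$ and $\|f_N\|_{H^\alpha}\lesssim N^{\alpha+1/2}$ one gets $\alpha\ge 1/4$. For Boussinesq and Beam the paper does not redo the computation at a specific $t_N$ (note that the relevant time is $t\asymp N^{-1/2}$, not $N^{-1}$) but argues by contraposition through the abstract transference principle (Theorem~\ref{transference_principle}): if the maximal estimate held for one of these equations at some $\alpha_0<1/4$, it would transfer to the Schr\"odinger equation for all $\alpha>\alpha_0$, contradicting Step~1.
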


\begin{remark} \label{remark_results2_3_key_ideas}
We now briefly mention the key ideas for the proofs of Theorems \ref{result2} and \ref{result3}:
\begin{enumerate}
\item For the Schr\"odinger equation, we follow the general outline of the arguments employed by Chen et. al. on $\Sb^d$:
\begin{itemize}
\item[(a)] In the proof of Theorem \ref{result2}, the maximal function is estimated near the poles by the Sobolev embedding (see Lemma \ref{sobolev_embedding}). Then away from the poles, by expanding the Jacobi polynomials, the problem is reduced to a maximal estimate on $\T$. This is then obtained by adapting the number-theoretic ideas of Bourgain \cite{Bourgain-GAFA} and Moyua-Vega \cite{MV}, relating the maximal estimate to a Strichartz estimate (see Lemmata \ref{Strichartz} and \ref{max_estimate_circle}).
\item[(b)] Following ideas of Moyua-Vega \cite{MV}, the proof of Theorem \ref{result3} uses quadratic Gauss sum evaluations to obtain the desired rate of divergence on a suitable set. Then using asymptotics of arithmetic functions, we see that the set is suitably large (see Lemma \ref{counting_lemma}).
\end{itemize}
\item The above mentioned number theoretic techniques are special for the case of the Schr\"odinger equation, as they do not seem to have a straightforward extension to other dispersive equations with non-polynomial phase, such as the Boussinesq equation or the Beam equation. Instead, we obtain Theorems \ref{result2} and \ref{result3} for the other two equations by means of a novel transference principle (see Theorem \ref{transference_principle}), which seems to be new even for $\T \cong SO(2)$ and may be of independent interest. We now explain the transference principle:
\end{enumerate}
\end{remark}

\begin{definition} \label{my_defn}
Let us consider two dispersive equations of the form (\ref{dispersive}) corresponding to $\psi_1,\psi_2$.
\begin{itemize}
\item[(i)] They are called $(\psi_1,\psi_2)$-{\it transferrable} if given that for some $\alpha_0>0$ and some $p \in [1,\infty]$, the maximal estimate 
\begin{equation*}
\left\|\sup_{0 \le t < 2\pi} \left|e^{-it\psi_1\left(\sqrt{-\Delta}\right)} f \right|\right\|_{L^p(\X)} \le C(\alpha) {\|f\|}_{H^\alpha(\X)}\:,
\end{equation*}
holds for some $C(\alpha)>0$, all $\alpha >\alpha_0$ and all $K$-biinvariant $f \in C^\infty(\X)$, it follows that the maximal estimate 
\begin{equation*}
\left\|\sup_{0 \le t < 2\pi} \left|e^{-it\psi_2\left(\sqrt{-\Delta}\right)} f \right|\right\|_{L^p(\X)} \le C'(\alpha) {\|f\|}_{H^\alpha(\X)}\:,
\end{equation*}
also holds for some $C'(\alpha)>0$, all $\alpha >\alpha_0$ and all $K$-biinvariant $f \in C^\infty(\X)$. 

\item[(ii)] If the two equations are both $(\psi_1,\psi_2)$-transferrable as well as $(\psi_2,\psi_1)$-transferrable, then they are simply called {\it transferrable}.

\item[(iii)] If there exist $R>0$ and $C>0$, such that the phase functions $\psi_1$ and $\psi_2$ satisfy
\begin{equation*}
\left|\psi_1(r)-\psi_2(r)\right| \le C\:, \text{ for all } r > R\:,
\end{equation*} 
then the equations are said to be {\it of comparable oscillation}. 
\end{itemize}
\end{definition}

\begin{remark} \label{examples_remark}
Point (iii) of Definition \ref{my_defn} means that in high frequency, both the phase functions are within bounded error. Let us now consider the equations mentioned before. The phase functions for the Boussinesq equation and the Beam equation are given respectively as,
\begin{equation*}
\psi_1(r)= r \sqrt{1+r^2} \:, \text{ and } \psi_2(r)=  \sqrt{1+r^4}\:.
\end{equation*}
For $r$ large,
\begin{eqnarray*}
&&\psi_1(r)= r \left(r + \mathcal{O}(r^{-1})\right) = r^2 + \mathcal{O}(1)\:,\\
&&\psi_2(r)= r^2 + \mathcal{O}(1)\:,
\end{eqnarray*}
and thus both are of comparable oscillation to the Schr\"odinger equation $(\psi(r)=r^2)$. 
\end{remark}

The notion of {\it comparable oscillation} defines an equivalence relation. Our next result presents the phenomenon that all the equations in the same equivalence class `behave similarly': 

\begin{theorem} \label{transference_principle}
Let $\psi_1$ and $\psi_2$ be continuous real-valued functions on $[0,\infty)$. If the dispersive equations corresponding to $\psi_1$ and $\psi_2$ are of comparable oscillation, then they are also transferrable.  
\end{theorem}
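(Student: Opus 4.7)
The plan is to extract the maximal estimate for $\psi_2$ from the one for $\psi_1$ by using the uniform control $|\psi_2(\lambda)-\psi_1(\lambda)|\le C$ on $(R,\infty)$ together with an elementary Taylor series expansion. Throughout, let $\{\phi_k\}$ be an $L^2$-orthonormal basis of zonal (i.e.\ $K$-biinvariant) eigenfunctions with $-\Delta\phi_k=\lambda_k^2\phi_k$ and $\lambda_k\ge 0$, and expand a $K$-biinvariant smooth $f$ as $f=\sum_k c_k\phi_k$. Let $P_{\le R}$ and $P_{>R}$ denote the spectral projectors onto the eigenspaces with $\lambda_k\le R$ and $\lambda_k>R$, respectively.

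Because the spectrum of $-\Delta$ is discrete, the range of $P_{\le R}$ is finite-dimensional, so $e^{-it\psi_2(\sqrt{-\Delta})}$ acts on it with uniformly bounded matrix entries; consequently
\begin{equation*}
\Bigl\|\sup_{0\le t<2\pi}\bigl|e^{-it\psi_2(\sqrt{-\Delta})}P_{\le R}f\bigr|\Bigr\|_{L^p(\X)}\le\sum_{\lambda_k\le R}|c_k|\,\|\phi_k\|_{L^p(\X)}\le C_R\,\|f\|_{H^\alpha(\X)}
\end{equation*}
takes care of the low-frequency part. For the high-frequency piece set $h:=\psi_2-\psi_1$, so that $|h(\lambda_k)|\le C$ whenever $\lambda_k>R$ and hence $|t\,h(\lambda_k)|\le 2\pi C$ for $t\in[0,2\pi)$. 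Expanding the scalar identity $e^{-it\psi_2(\lambda)}=e^{-it\psi_1(\lambda)}\sum_{n\ge 0}(-it\,h(\lambda))^n/n!$ at each eigenvalue and carrying out spectral synthesis yields
\begin{equation*}
e^{-it\psi_2(\sqrt{-\Delta})}P_{>R}f=\sum_{n=0}^{\infty}\frac{(-it)^n}{n!}\,e^{-it\psi_1(\sqrt{-\Delta})}g_n,\qquad g_n:=\bigl[h(\sqrt{-\Delta})\bigr]^n P_{>R}f,
\end{equation*}
with absolute and uniform convergence on $\X\times[0,2\pi)$ thanks to the faster-than-polynomial decay of the $c_k$'s for smooth $f$ and the bound $|h(\lambda_k)^n|\le C^n$. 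Each $g_n$ is $K$-biinvariant and smooth, and $\|g_n\|_{H^\alpha(\X)}\le C^n\|f\|_{H^\alpha(\X)}$ directly from the spectral formula for the Sobolev norm.

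Next I take absolute values, supremum over $t$, and the $L^p$-norm, interchanging each of these with the sum using the elementary inequality $\sup_t\sum_n a_n(t)\le\sum_n\sup_t a_n(t)$ for non-negative $a_n(t)$ and Minkowski. Applying the hypothesized $\psi_1$-maximal estimate to the smooth $K$-biinvariant function $g_n$ then gives
\begin{align*}
\Bigl\|\sup_{0\le t<2\pi}\bigl|e^{-it\psi_2(\sqrt{-\Delta})}P_{>R}f\bigr|\Bigr\|_{L^p(\X)}
&\le\sum_{n=0}^{\infty}\frac{(2\pi)^n}{n!}\,\Bigl\|\sup_t\bigl|e^{-it\psi_1(\sqrt{-\Delta})}g_n\bigr|\Bigr\|_{L^p(\X)}\\
&\le C(\alpha)\sum_{n=0}^{\infty}\frac{(2\pi C)^n}{n!}\|f\|_{H^\alpha(\X)}=C(\alpha)\,e^{2\pi C}\,\|f\|_{H^\alpha(\X)}.
\end{align*}
Combined with the low-frequency bound, this establishes $(\psi_1,\psi_2)$-transferrability; since the comparable-oscillation hypothesis is symmetric in $\psi_1,\psi_2$, the same argument with the roles swapped yields $(\psi_2,\psi_1)$-transferrability, so the two equations are transferrable.

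The main technical point is ensuring that the interchanges of the infinite sum with $\sup_t$ and with the $L^p$-norm are legitimate and that the operator identity expressing $e^{-it\psi_2(\sqrt{-\Delta})}P_{>R}f$ as a Taylor series really holds pointwise on $\X\times[0,2\pi)$; once the geometric majorant $\sum(2\pi C)^n/n!=e^{2\pi C}$ is exhibited against the rapidly decaying Fourier coefficients of the smooth $f$, everything reduces to the triangle/Minkowski inequality. No input beyond the hypothesis and basic spectral calculus for $\sqrt{-\Delta}$ enters the proof.
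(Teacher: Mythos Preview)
Your proof is correct and shares the paper's central idea: expand $e^{-it\psi_2(\lambda)}=e^{-it\psi_1(\lambda)}\sum_{n\ge 0}(-it\,h(\lambda))^n/n!$ and apply the assumed $\psi_1$-maximal estimate term by term, summing the resulting geometric-exponential series. The difference is structural. The paper first performs a dyadic frequency decomposition $f=f_{m_0}+\sum_{m>m_0}f_m$, applies the Taylor trick to each $f_m$ separately, and then invokes Lemma~\ref{sobolev_comparison} (comparison of Sobolev scales) to manufacture a decay factor $2^{-m\varepsilon/2}$ needed to sum over $m$. You sidestep all of this by applying the Taylor expansion to $P_{>R}f$ in one shot and using the bound $\|g_n\|_{H^\alpha}\le C^n\|f\|_{H^\alpha}$ directly at the target exponent $\alpha$; no dyadic summation arises, so no auxiliary lemma is needed. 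Your route is genuinely more elementary and shows that the dyadic step in the paper is not essential to the argument. The paper's approach, on the other hand, has the minor advantage of making explicit how the estimate behaves on each frequency shell, which could be useful if one wanted sharper quantitative control, but for the theorem as stated your argument is cleaner.
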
 

This article is organized as follows. In Section $2$, we fix our notations, recall the essential preliminaries about Riemannian symmetric spaces of compact type and harmonic analysis thereon and also some number theoretic facts. We prove Theorems \ref{result1}, \ref{transference_principle}, \ref{result2} and \ref{result3} in Sections $3$, $4$, $5$ and $6$ respectively. Finally, we conclude by posing some open problems in Section $7$.  

\section{Preliminaries}

\subsection{Some notations} Throughout this article $c,C,\dots$ will be used to denote positive constants whose values may change on each occurrence. $\N$ will denote the set of positive integers. Two positive functions $f_1$ and $f_2$ will satisfy,
\begin{itemize}
\item  $f_1 \lesssim f_2$ if there exists $C\ge 1$ such that $f_1 \le Cf_2$;
\item  $f_1 \gtrsim f_2$ if there exists $C\ge 1$ such that $Cf_1 \ge f_2$;
\item $f_1 \asymp f_2$ if there exists $C\ge 1$ such that $\frac{1}{C}f_1 \le f_2 \le Cf_1$;
\item $f_1=o(f_2)$ if $\frac{f_1}{f_2} \to 0$ while considering some limit.
\end{itemize}
The notation $\lesssim_\varepsilon$ will also be used to denote the dependence of the parameter $\varepsilon$. In addition, if $f_1$ is complex-valued, we will write,
\begin{itemize}
\item $f_1=\mathcal{O}(f_2)$ to denote that $|f_1| \lesssim f_2$\:.
\end{itemize}
For $x \in \R$, $\lfloor x \rfloor$ will denote the largest integer smaller or equal to $x$. For a set $A$, the notation $\#A$ will denote the cardinality of $A$.

\subsection{Riemannian symmetric spaces of compact type and harmonic analysis thereon} In this subsection, we briefly recall some definitions and results on Riemannian symmetric spaces of compact type. The relevant information can be found in  \cite{HelgasonDiff,HelSymm, Helgason, OS}.

Riemannian symmetric spaces of compact type are homogeneous spaces $\X=U/K$, where $U$ is a connected, simply connected, compact, semi-simple Lie group and $K$ is a closed subgroup with the property that $U^\theta_0 \subset K \subset U^\theta$ for an involution $\theta$ of $U$. Here $U^\theta$ denotes the subgroup of $\theta$-fixed points and $U^\theta_0$ its identity component.

Let $\mathfrak{u}$ be the Lie algebra of $U$ and $\mathfrak{u}=\mathfrak{k}+\mathfrak{q}$ be the Cartan decomposition corresponding to $\theta$, where $\mathfrak{k}$ is the Lie algebra of $K$ and $\mathfrak{q}$ can be identified with the tangent space at the origin $T_o\X$ where $o=eK$. The Killing form being negative definite on $\mathfrak{u}$ induces an inner product $\langle \cdot, \cdot \rangle$ on $\mathfrak{u}$. Then $\mathfrak{k}$ and $\mathfrak{q}$ become orthogonal subspaces. We assume that the normalized Riemannian metric agrees with $\langle \cdot, \cdot \rangle$ on  $\mathfrak{q}=T_o\X$.

$\X$ is equipped with the push-forward measure of the normalized Haar measure $du$ of $U$. Let $(\delta,V_\delta)$ be an irreducible unitary representation of $U$ and $V^K_\delta$ be the space of vectors $v \in V_\delta$ fixed under $\delta(K)$. Let $\what{U}_K$ denote the collection of equivalence classes of irreducible, unitary representations $\delta$ of $U$ such that $V^K_\delta \ne 0$. Then from the Peter-Weyl theory of $U$, we can obtain the decomposition,
\begin{equation} \label{peter-weyl}
L^2(\X) = \bigoplus_{\delta \in \what{U}_K} H_\delta(\X)\:,
\end{equation}
where
\begin{equation*}
H_\delta(\X):= \left\{\langle \delta(u){\bf e}, v   \rangle \mid v \in V_\delta\right\}\:,
\end{equation*}
and ${\bf e}$ is the unique unit vector spanning $V^K_\delta$.

We now obtain a more explicit realization of  $\what{U}_K$ in terms of the highest weight lattice of the underlying root system. Let $\mathfrak{a} \subset \mathfrak{q}$ be a maximal abelian subspace, $\mathfrak{a}^*$ its dual and $\mathfrak{h} \subset \mathfrak{u}$ be a Cartan subalgebra containing $\mathfrak{a}$. Then $\mathfrak{h} = \mathfrak{h}_m \oplus \mathfrak{a}$, where $\mathfrak{h}_m = \mathfrak{h} \cap \mathfrak{k}$. The dimension of $\mathfrak{a}$ is the rank of $\X$. By duality, we can equip $\mathfrak{a}^*$ with an inner product, which is essentially induced by the Killing form. We denote by $\Sigma$ the set of non-zero restrictions to $\mathfrak{a}$ of roots of $\mathfrak{u}$ with respect to $\mathfrak{h}$.    We fix a set of positive restricted roots $\Sigma^+ \subset \Sigma$ and by $\Sigma^+_s$ denote the corresponding set of positive simple roots. We will denote by $\rho$, the half-sum of the roots in $\Sigma^+$ counted with multiplicity. The set $\Sigma^+_s$ forms a basis of $\mathfrak{a}^*$. If rank$(\X)=l$ and $\Sigma^+_s =\{\beta_1, \dots, \beta_l\}$, then \cite[Chap. VII, Lemma 2.18]{HelgasonDiff}
\begin{equation} \label{negative_killing_form}
\langle \beta_j,\beta_k \rangle \le 0\:,\:\forall 1 \le j , k \le l\:.
\end{equation}
Let $\Lambda^+(\mathfrak{h})$ denote the set of dominant integral linear functionals on $\mathfrak{h}$.  On the other hand let $\Lambda^+(U) \subset \Lambda^+(\mathfrak{h})$ denote the set of highest weights of irreducible representations of $U$. Since our $U$ is simply-connected we have $\Lambda^+(U) = \Lambda^+(\mathfrak{h})$. Let $\Lambda^+(U/K)$ denote the subset of $\Lambda^+(U)$ corresponding to $\what{U}_K$. By \cite[Theorem 4.1, p. 535]{Helgason}, we have the following identification of $\Lambda^+(U/K)$: let $\mu \in \Lambda^+(U)$, then $\mu \in \Lambda^+(U/K)$ if and only if $\mu|_{\mathfrak{h}_m}=0$ and its restriction to $\mathfrak{a}$ (that is, as an element of $\mathfrak{a}^*$) satisfies
\begin{equation*}
\frac{\langle \mu, \beta \rangle}{\langle \beta, \beta \rangle} \in \N \cup \{0\}\:,\:\: \beta \in \Sigma^+\:.
\end{equation*}
From here on we will work with the restriction of $\mu$ to $\mathfrak{a}$. Thus in view of the above identification, we can rewrite the decomposition (\ref{peter-weyl}), by an abuse of notation as,
\begin{equation*}
L^2(\X) = \bigoplus_{\mu \in \Lambda^+(U/K)} H_\mu(\X)\:.
\end{equation*}
The above decomposition can also be viewed as the eigenspace decomposition of the Laplace-Beltrami operator on the Riemannian manifold $\X$. By \cite[Lemma 1, Section 6.6, p. 367]{Pro}, for any $f \in H_\mu(\X)$,
\begin{equation*}
\Delta f = -\lambda^2_\mu f\:,
\end{equation*}
where
\begin{equation} \label{eigenvalues}
\lambda^2_\mu = \langle \mu,\mu \rangle + 2 \langle \mu, \rho \rangle\:,\:\: \mu \in \Lambda^+(U/K)\:.
\end{equation}

Let $d_\mu :=dim\left(H_\mu(\X)\right)$ and $\{Y_{\mu,j}\}_{j=1}^{d_\mu}$ be an orthonormal basis of $H_\mu(\X)$. Thus for $f \in C^\infty(\X)$, the Fourier series of $f$ is given by,
\begin{equation} \label{Fourier_series}
f=\sum_{\mu \in \Lambda^+(U/K)} \sum_{j=1}^{d_\mu} \what{f}(\mu,j)\:Y_{\mu,j}\:,
\end{equation}
where the Fourier coefficients are given by,
\begin{equation*}
\what{f}(\mu,j) = \int_{\X} f(x)\:\overline{Y_{\mu,j}(x)}\:dx\:,
\end{equation*}
and the series converges uniformly and absolutely. 

By the Plancherel identity, the Sobolev norms for the Sobolev spaces defined in (\ref{Sobolev_space}) are given by,
\begin{equation*}
\|f\|_{H^\alpha(\X)}:=\left(\sum_{\mu \in \Lambda^+(U/K)} {(1+\lambda^2_{\mu})}^\alpha \sum_{j=1}^{d_\mu} \left|\what{f}(\mu,j)\right|^2 \right)^{\frac{1}{2}}\:,\:\:\:\alpha \ge 0\:.
\end{equation*}

\subsubsection{The rank one case:} We now specialize to the rank one case. The relevant information can be found in \cite{Helgason,CT, Gigante}.

Without loss of generality, the Riemannian measure and the metric on $\X$ can be renormalized so that the total measure of $\X$ is $1$ and the diameter of $\X$ is $\pi$. For a fixed origin $o \in \X$, the points with distance from $o$ equal to the diameter of $\X$ are the antipodal points of $o$. The collection of all antipodal points of $o$ is termed as the {\em antipodal manifold}. Denoting the dimension of the antipodal manifold as $M_1$ and the dimension of $\X$ as $d$, we introduce a non-negative integer $M_2$ so that $M_1+M_2+1=d$. Furthermore, if $\Ac(\theta)$ denotes the surface measure of a geodesic sphere in $\X$, centered at $o$ with radius $\theta$, for $\theta \in [0,\pi]$, then it has the explicit form,
\begin{equation} \label{density}
\Ac(\theta)= C {\left(\sin \frac{\theta}{2}\right)}^{M_1} {(\sin \theta)}^{M_2}\:,
\end{equation}
where the constant $C>0$ is such that $\int_0^\pi \Ac(\theta) d\theta=1$. The specific values of the parameters $M_1,M_2$ are given in Table \ref{table:1}.
 
\begingroup

\setlength{\tabcolsep}{10pt} % Default value: 6pt
\renewcommand{\arraystretch}{1.5} % Default value: 1
\begin{table}[H]
\centering
\begin{tabular}{ ||p{1.7cm}|| p{1cm} | p{1cm} || p{1.7cm}|| p{1cm} | p{1cm}| }
 %\hline
 %\multicolumn{4}{|c|}{} \\
 \hline
 $\X$ & $M_1$ & $M_2$ & $\X$ & $M_1$ & $M_2$ \\
 \hline 
$\Sb^d$   & $0$   & $d-1$ & $P^d(\Hb)$ & $d-4$ & \:\:$3$  \\
$P^d(\R)$ & $d-1$   & \:\:$0$ & $P^{16}(Cay)$ & \:\:$8$ & \:\:$7$ \\
$P^d(\C)$ & $d-2$  & \:\:$1$ & & &   \\
\hline
\end{tabular}

\medskip

\caption{}
\label{table:1}
\end{table}
\endgroup

In rank one, we have the concrete realization of the set $\Lambda^+(U/K)=\N \cup \{0\}$ and the eigenvalues of the Laplace-Beltrami operator $\Delta$ are parametrized as $-\lambda^2_n$ where
\begin{equation} \label{rank1_eigenvalue_form} 
\lambda^2_n =n(n+\sigma+\tau+1)\:,\:\:n \in \N \cup \{0\}\:,
\end{equation}  
and the parameters $\sigma, \tau$ and the eigenvalues $\lambda^2_n$ are given in Table \ref{table:2}.

\begingroup

\setlength{\tabcolsep}{10pt} % Default value: 6pt
\renewcommand{\arraystretch}{1.5} % Default value: 1
\begin{table}[h!]
\centering
\begin{tabular}{ ||p{1.7cm}|| p{1cm}| p{1cm}| p{2.3cm}|  }
 %\hline
 %\multicolumn{4}{|c|}{} \\
 \hline
 $\X$ & $\sigma$ & $\tau$ & $\lambda^2_n$\\
 \hline 
$\Sb^d$   & $\frac{d-2}{2}$   & $\frac{d-2}{2}$ & $n(n+d-1)$ \\
$P^d(\R)$ & $\frac{d-2}{2}$   & $-\frac{1}{2}$  & $n\left(n+\frac{d-1}{2}\right)$ \\
$P^d(\C)$ & $\frac{d-2}{2}$  & \:\:$0$  & $n\left(n+\frac{d}{2}\right)$ \\
$P^d(\Hb)$ & $\frac{d-2}{2}$ & \:\:$1$ & $n\left(n+1+\frac{d}{2}\right)$   \\
$P^{16}(Cay)$ & \:\:$7$ & \:\:$3$  & $n(n+11)$ \\
\hline
\end{tabular}

\medskip

\caption{}
\label{table:2}
\end{table}
\endgroup

\begin{remark} \label{rank1_eigenvalue_remark}
From Table \ref{table:2} we observe that 
\begin{equation*}
\lambda^2_n \asymp n^2\:,\:\: n \in \N \cup \{0\}\:,
\end{equation*}
with the implicit constant only depending on the intrinsic geometry of $\X$.
\end{remark}

The quantity $d_n$, the dimension of the finite dimensional subspaces $H_n$ satisfy,
\begin{equation} \label{dimension_growth}
d_n \asymp \left(1+n\right)^{d-1}\:.
\end{equation}
For $f \in C^\infty(\X)$, by taking the orthogonal projections $Y_nf$ onto $H_n$, the Fourier series (\ref{Fourier_series}) can be rewritten as
\begin{equation} \label{dummy_series}
f(x)=\sum_{n=0}^\infty Y_nf(x)\:,
\end{equation}
where 
\begin{eqnarray*}
 Y_nf(x) &=& \sum_{j=1}^{d_n} \what{f}(n,j)\:Y_{n,j}(x) \\
 &=& \int_{\X} f(y)\left(\:\sum_{j=1}^{d_n} Y_{n,j}(x)\overline{Y_{n,j}(y)}\right)\:dy \\
 &=& \int_{\X} f(y)\:Z_n(x,y)\:dy\:,
\end{eqnarray*}
where 
\begin{equation*}
Z_n(x,y):=\sum_{j=1}^{d_n} Y_{n,j}(x)\:\overline{Y_{n,j}(y)}\:,
\end{equation*}
are the {\em zonal spherical functions} of degree $n$ and pole $x$. In rank one, these functions are given in terms of Jacobi polynomials:
\begin{equation} \label{in_terms_jacobi}
Z_n(x,y):= d_n \frac{P^{(\sigma,\tau)}_n{\left(\cos(d(x,y))\right)}}{P^{(\sigma,\tau)}_n(1)}\:,\:\:x,y \in \X\:,
\end{equation}
We also note that as $P^{(\sigma,\tau)}_0$ is a constant function, so is the zonal spherical function of degree $0$, $Z_0(\cdot,\cdot)$. The following expansion of the Jacobi polynomials will be relevant for us:
\begin{lemma} \cite[Theorem 8.21.13]{Szego}\label{jacobi_polynomial}
Let $\sigma >-1$ and $\tau>-1$. Then we have, for $n \in \N$,
\begin{eqnarray*}
&&P^{(\sigma,\tau)}_n (\cos \theta)\\
&=& \frac{1}{(\pi n)^{\frac{1}{2}} \left(\sin \frac{\theta}{2}\right)^{\sigma +\frac{1}{2}}\left(\cos \frac{\theta}{2}\right)^{\tau +\frac{1}{2}}}\left\{\cos\left(\left(n+\frac{\sigma+\tau+1}{2}\right)\theta -\frac{\left(\sigma +\frac{1}{2}\right)}{2}\pi\right)+\frac{\mathcal{O}(1)}{n\sin\theta}\right\}\:,
\end{eqnarray*}
for $cn^{-1} \le \theta \le \pi -cn^{-1}$, where $c$ is a fixed positive number.
\end{lemma}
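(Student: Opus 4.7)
The plan is to derive this asymptotic from the Sturm--Liouville equation satisfied by the Jacobi polynomials via the classical Liouville--Green method. Starting from the ODE
$$
(1-x^2) y'' + \bigl((\tau-\sigma) - (\sigma+\tau+2)x\bigr) y' + n(n+\sigma+\tau+1) y = 0,
$$
I would substitute $x=\cos\theta$ and pass to the normal-form function
$$
u(\theta) := \left(\sin\tfrac{\theta}{2}\right)^{\sigma+1/2} \left(\cos\tfrac{\theta}{2}\right)^{\tau+1/2} P_n^{(\sigma,\tau)}(\cos\theta),
$$
which is precisely the factor appearing in the denominator of the claimed formula. A direct computation then shows that $u$ satisfies a Schr\"odinger-type equation
$$
u''(\theta) + \bigl(N^2 - V_{\sigma,\tau}(\theta)\bigr) u(\theta) = 0, \qquad N = n + \tfrac{\sigma+\tau+1}{2},
$$
where $V_{\sigma,\tau}$ is independent of $n$, smooth on $(0,\pi)$, and singular at the endpoints with leading poles of order $(\sigma^2-1/4)/(4\sin^2(\theta/2))$ and $(\tau^2-1/4)/(4\cos^2(\theta/2))$.

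Next, I would compare $u$ with the model $v''+N^2v=0$ via the Volterra integral representation
$$
u(\theta) = A\cos(N\theta - \phi) + \frac{1}{N}\int_{\theta_0}^{\theta} \sin\bigl(N(\theta-s)\bigr)\, V_{\sigma,\tau}(s)\, u(s)\, ds,
$$
and iterate. On any compact subinterval of $(0,\pi)$ this already yields $u(\theta) = A\cos(N\theta - \phi) + \mathcal{O}(1/N)$. To sharpen the error to $\mathcal{O}((n\sin\theta)^{-1})$ uniformly on $[cn^{-1}, \pi - cn^{-1}]$, I would integrate once by parts in the Volterra kernel to exploit the oscillation of $\sin(N(\theta-s))$, which converts the bound into one of the form $V_{\sigma,\tau}(\theta)/N$ at the upper endpoint plus a uniformly integrable tail. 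The boundary term is of order $1/(N\sin^2(\theta/2))$, and after dividing out the normalizing denominator $\sin^{\sigma+1/2}(\theta/2)\cos^{\tau+1/2}(\theta/2)$ one recovers exactly the factor $1/(n\sin\theta)$ in the error term.

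The main obstacle is fixing the global constants $A = (\pi n)^{-1/2}$ and $\phi = (\sigma+1/2)\pi/2$, which cannot be read off from the ODE analysis alone since any eigenfunction can be rescaled. I would pin them down by a boundary-layer argument near $\theta = 0$: under the scaling $\theta = z/N$, the normal-form equation degenerates in the limit $n \to \infty$ to Bessel's equation of order $\sigma$, whose bounded solution is proportional to $J_\sigma(z)$. The proportionality constant is fixed by matching with the explicit value $P_n^{(\sigma,\tau)}(1) = \binom{n+\sigma}{n} \sim n^\sigma/\Gamma(\sigma+1)$. Invoking the classical large-argument asymptotic $J_\sigma(z) \sim (2/\pi z)^{1/2} \cos\bigl(z - (\sigma+1/2)\pi/2\bigr)$ then produces both the amplitude $(\pi n)^{-1/2}$ and the phase shift $-(\sigma+1/2)\pi/2$ through the matching with the oscillatory representation valid in the interior, completing the asymptotic in the stated range.
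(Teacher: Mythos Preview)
The paper does not prove this lemma; it is quoted as Theorem~8.21.13 from Szeg\"o's monograph and used as a black box, so there is no argument in the paper to compare yours against.

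Your Liouville--Green outline is one of the standard classical routes to this asymptotic and is correct in its main architecture: the reduction to the normal form $u''+(N^{2}-V_{\sigma,\tau})u=0$ with $N=n+\tfrac{\sigma+\tau+1}{2}$ is exactly the right substitution, the Volterra comparison with the free solution $\cos(N\theta-\phi)$ is the natural mechanism for the interior asymptotic, and the Mehler--Heine matching with $J_{\sigma}$ near $\theta=0$ (anchored by $P_{n}^{(\sigma,\tau)}(1)=\binom{n+\sigma}{n}$) is precisely how the amplitude $(\pi n)^{-1/2}$ and the phase shift $(\sigma+\tfrac12)\pi/2$ are pinned down. The one place where your sketch is thin is the uniform error $\mathcal{O}((n\sin\theta)^{-1})$ all the way out to $\theta\sim cn^{-1}$: a single integration by parts in the Volterra kernel does not close on its own, since the residual integral involves $(V_{\sigma,\tau}u)'$, whose endpoint singularity is one order worse than that of $V_{\sigma,\tau}$. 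The rigorous treatments (Szeg\"o's own, or Langer's Bessel-comparison method as in Olver) handle the transition zone by comparing with the Bessel equation rather than the free equation in the boundary layer, which is what ultimately yields the sharp uniform error. Your plan is sound but would need this refinement to be complete.
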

 
Clearly $Z_n(o,\cdot)$ is a radial function. It is an eigenfunction of the Laplace-Beltrami operator with eigenvalue $-\lambda^2_n$. Furthermore, by \cite[Remark 3.5]{Pasquale}, we have
\begin{equation}\label{L2_Zn}
\|Z_n(o,\cdot)\|_{L^2(\X)}=d^{\frac{1}{2}}_n\:.
\end{equation}
We note that by the Cartan decomposition and the transitive action of $K$ on the set of points at a given distance from $o$, $K$-biinvariant functions on $U$ can be identified as radial functions on $\X$ and hence as functions on $[0,\pi]$. Then for $n \in \N$, identifying the function $\theta \mapsto P^{(\sigma,\tau)}_n{\left(\cos(\theta)\right)}$ on $[0,\pi]$ as a $K$-biinvariant function on $\X$, from (\ref{in_terms_jacobi}) by combining (\ref{dimension_growth}), (\ref{L2_Zn}) and the fact that $$P^{(\sigma,\tau)}_n(1) = {n+\sigma \choose n} \asymp n^\sigma=n^{\frac{d-2}{2}}\:,$$
we get
\begin{equation} \label{jacobi_poly_L2}
\left\|P^{(\sigma,\tau)}_n{\left(\cos(\cdot)\right)}\right\|_{L^2(\X)} \asymp n^{-\frac{1}{2}}\:,\:n \in \N\:.
\end{equation}
When $f$ is also $K$-biinvariant or equivalently radial, so are its orthogonal projections $Y_nf$ and are given by
\begin{equation} \label{dummy_coefficient}
Y_nf=b_nZ_n(o,\cdot)\:,
\end{equation}
for some $b_n \in \C$. 

Now renormalizing the zonal spherical functions as,
\begin{equation} \label{normalization}
\tilde{Z}_n := \frac{Z_n(o,\cdot)}{\:\:\|Z_n(o,\cdot)\|_{L^2}}\:,
\end{equation}
we rewrite (\ref{dummy_series}) and (\ref{dummy_coefficient}), in terms of $\tilde{Z}_n$ to obtain,
\begin{equation} \label{spherical_fourier_series}
f=\sum_{n=0}^\infty a_n\:\tilde{Z}_n\:,
\end{equation}
for some $a_n \in \C$\:. We will refer to (\ref{spherical_fourier_series}) as the spherical Fourier series of $f$. 

By the Plancherel identity, the Sobolev norms for $K$-biinvariant functions $f$ are given by,
\begin{equation*}
\|f\|_{H^\alpha(\X)}:=\left(\sum_{n=0}^\infty {(1+\lambda^2_n)}^\alpha \left|a_n\right|^2 \right)^{\frac{1}{2}}\:,\:\:\:\alpha \ge 0\:.
\end{equation*} 

We will also require some efficient $L^\infty$ bound of $\tilde{Z}_n$:
\begin{lemma} \label{Linfty_bound}
\begin{itemize}
\item[(i)] Near the origin, we have
 \begin{equation*}
 \|\tilde{Z}_n\|_{L^\infty\left(\left[0,\frac{\pi}{2}\right]\right)} \lesssim {\left(1+n\right)}^{\frac{d-1}{2}}\:,\:\: n \in \N \cup \{0\}\:.
\end{equation*}  
\item[(ii)] Near the antipodal manifold, we have
\begin{equation*}
\|\tilde{Z}_n\|_{L^\infty\left(\left[\frac{\pi}{2},\pi\right]\right)} \lesssim \begin{cases}
(1+n)^{\frac{d-1}{2}}\:\:\:&\text{if } \X=\Sb^d\:,\\
1\:\:\:&\text{if } \X=P^d(\R)\:,\\
(1+n)^{\frac{1}{2}}\:\:\:&\text{if } \X=P^d(\C)\:,\\
(1+n)^{\frac{3}{2}}\:\:\:&\text{if } \X=P^d(\Hb)\:,\\
(1+n)^{\frac{7}{2}}\:\:\:&\text{if } \X=P^{16}(Cay)\:.
\end{cases}
\end{equation*}
\end{itemize}
\end{lemma}
\begin{proof}
The result follows from (\ref{normalization}), (\ref{L2_Zn}), (\ref{dimension_growth}) and the pointwise bounds in \cite[Lemma 7]{CT}.
\end{proof}

\subsection{Number theoretic facts}
In this subsection, we recall some results in number theory, which will be crucial for us. All of this can be found in the classical text \cite{Hardy}.

We first see the following classical asymptotics on the Gauss circle problem, i.e. the lattice point counting inside disks, centered at the origin:
\begin{lemma} \label{lattice_point_counting} For $n \in \N$, let $$D_n:= \left\{(n_1,n_2) \in \Z^2 \mid \left(n^2_1 + n^2_2\right)^\frac{1}{2} \le n\right\}\:.$$ Then 
$$\#D_n = \pi n^2 (1+o(1))\:,\text{ as } n \to \infty\:.$$
\end{lemma}
Next we look at some fundamental arithmetic functions, that is, functions defined on $\N$. The Euler's totient function $\varphi$ is defined as $\varphi(n):=$ the number of positive integers less than and prime to $n$. The M\"obius function $\mu$ is defined as,
\begin{equation*}
\mu(n):=\begin{cases}
1\:\:\:&\text{if } n=1\:,\\
{(-1)}^j\:\:\:&\text{if } n \text{ is the product of } j \text{ distinct prime factors}\:, \\
0\:\:\:&\text{if } n \text{ has a squared factor }\:.
\end{cases}
\end{equation*} 
The following is the fundamental identity satisfied by the M\"obius function \cite[Theorem 263]{Hardy}:
\begin{equation} \label{mobius_identity}
\sum_{m|n} \mu(m) =\begin{cases}
1\:\:\:&\text{ if } n=1\:,\\
0\:\:\:&\text{ if } n>1\:.
\end{cases}
\end{equation}
The functions $\varphi$ and $\mu$ are related via the M\"obius inversion \cite[Eqn. 16.3.1]{Hardy}:
\begin{equation} \label{mobius_inversion}
\frac{\varphi(n)}{n}= \sum_{m|n} \frac{\mu(m)}{m}\:.
\end{equation}
The divisor function is defined as $d(n):=$ the number of divisors of $n$ including $1$ and $n$.
We have the following asymptotics of the functions $\varphi$ and $d$ \cite[Theorems 315 and 327]{Hardy}: for any $\varepsilon>0$,
\begin{equation} \label{asymptotics}
d(n) \lesssim_{\varepsilon} n^\varepsilon \text{ and } \varphi(n) \gtrsim_{\varepsilon} n^{1-\varepsilon}\:,\text{ as } n \to \infty\:.
\end{equation}

\section{General result for symmetric spaces of rank $1$ and $2$}
In this section, we prove Theorem \ref{result1}. We first start off with a linear algebraic result:
\begin{lemma} \label{linear_algebra_lemma}
Let $\left(V,\langle \cdot,\cdot \rangle\right)$ be a 2-dimensional real inner product space with a basis $\{v_1,v_2\}$. Let 
\begin{equation*}
\xi_j(v):= \frac{\langle v,v_j \rangle}{\langle v_j,v_j \rangle}\:,\: v \in V\:,\:j=1,2\:.
\end{equation*}
Then for any $v \in V$,
\begin{equation*}
\langle v,v \rangle = \frac{\xi_1(v)^2 \langle v_1,v_1 \rangle^2 \langle v_2,v_2 \rangle + \xi_2(v)^2 \langle v_2,v_2 \rangle^2 \langle v_1,v_1 \rangle - 2\xi_1(v) \xi_2(v) \langle v_1,v_1 \rangle \langle v_1,v_2 \rangle \langle v_2,v_2 \rangle }{\langle v_1,v_1 \rangle \langle v_2,v_2 \rangle - \langle v_1,v_2 \rangle^2}\:.
\end{equation*}
\end{lemma}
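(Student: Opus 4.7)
The plan is to write $v$ in coordinates with respect to the basis $\{v_1, v_2\}$ and reduce the identity to inverting a $2 \times 2$ linear system. Write $v = a v_1 + b v_2$ for unique scalars $a, b \in \R$, and for brevity set $g_{ij} := \langle v_i, v_j \rangle$. Taking the inner product of this expansion with $v_j$ (and noting $\xi_j(v) g_{jj} = \langle v, v_j \rangle$ by definition) yields the linear system
\begin{equation*}
\xi_1(v)\, g_{11} = a\, g_{11} + b\, g_{12}, \qquad \xi_2(v)\, g_{22} = a\, g_{12} + b\, g_{22},
\end{equation*}
whose coefficient matrix is the Gram matrix $G$ of $\{v_1, v_2\}$, with determinant $g_{11}g_{22} - g_{12}^2$. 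This determinant is strictly positive by positive-definiteness of $\langle \cdot, \cdot \rangle$ together with the linear independence of $\{v_1, v_2\}$; in particular $G$ is invertible.

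The key observation is that one can bypass the direct expansion $\langle v, v \rangle = a^2 g_{11} + 2ab\, g_{12} + b^2 g_{22}$ by using instead
\begin{equation*}
\langle v, v \rangle = \langle a v_1 + b v_2, v \rangle = a\, \langle v, v_1 \rangle + b\, \langle v, v_2 \rangle = a\, \xi_1(v)\, g_{11} + b\, \xi_2(v)\, g_{22},
\end{equation*}
which is bilinear, rather than quadratic, in $(a,b)$. Inverting the above system via Cramer's rule gives
\begin{equation*}
a = \frac{\xi_1(v)\, g_{11} g_{22} - \xi_2(v)\, g_{12} g_{22}}{g_{11}g_{22} - g_{12}^2}, \qquad b = \frac{\xi_2(v)\, g_{11} g_{22} - \xi_1(v)\, g_{11} g_{12}}{g_{11}g_{22} - g_{12}^2}.
\end{equation*}
Substituting these expressions into the previous display and collecting terms produces the claimed numerator: the $\xi_1(v)^2$ term contributes $g_{11}^2 g_{22}$, the $\xi_2(v)^2$ term contributes $g_{11} g_{22}^2$, and the two $\xi_1(v)\xi_2(v)$ contributions combine into $-2 g_{11} g_{12} g_{22}$, all divided by $g_{11} g_{22} - g_{12}^2$, which is precisely the right-hand side of the asserted identity.

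There is essentially no obstacle here; this is a direct linear algebra computation, and is in fact the two-dimensional specialisation of the general formula expressing the squared norm of a vector in terms of its generalized Fourier coefficients with respect to a (possibly non-orthogonal) basis via the inverse of the Gram matrix. The only point to be checked is the nonvanishing of the denominator, which, as noted above, is automatic.
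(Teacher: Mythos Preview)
Your proof is correct and follows essentially the same approach as the paper: write $v$ in coordinates, derive the $2\times 2$ linear system for the coefficients from $\langle v,v_j\rangle$, use the bilinear identity $\langle v,v\rangle = a\,\xi_1(v)g_{11}+b\,\xi_2(v)g_{22}$ to avoid the full quadratic expansion, solve the system, and substitute. The only cosmetic differences are your Gram-matrix notation $g_{ij}$ and your explicit remark that $\det G>0$ by positive-definiteness; the paper simply solves the system without commenting on invertibility.
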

\begin{proof}
We first choose and fix $v \in V$. There exist $c_1(v),c_2(v) \in \R$ such that
\begin{equation}  \label{linear_eq1}
v=c_1(v)v_1+c_2(v)v_2\:.
\end{equation}
Thus we have,
\begin{equation} \label{linear_eq2}
\langle v,v \rangle = c_1(v)^2 \langle v_1,v_1 \rangle + 2c_1(v)c_2(v) \langle v_1,v_2 \rangle + c_2(v)^2 \langle v_2,v_2 \rangle  \:.
\end{equation}
From (\ref{linear_eq1}), we also have
\begin{eqnarray} \label{linear_eq3}
\langle v,v_1 \rangle &=& c_1(v) \langle v_1,v_1 \rangle + c_2(v) \langle v_1,v_2 \rangle\:,\\
\langle v,v_2 \rangle &=& c_1(v) \langle v_1,v_2 \rangle + c_2(v) \langle v_2,v_2 \rangle\:. \label{linear_eq4}
\end{eqnarray}
Thus combining (\ref{linear_eq2})-(\ref{linear_eq4}) and invoking the definition of $\xi_j,\:j=1,2$, we get
\begin{equation} \label{linear_eq5}
\langle v,v \rangle = c_1(v)\xi_1(v)\langle v_1,v_1 \rangle + c_2(v)\xi_2(v)\langle v_2,v_2 \rangle \:.
\end{equation}
Next solving the equations (\ref{linear_eq3}) and (\ref{linear_eq4}), we have
\begin{eqnarray} \label{linear_eq6}
c_1(v) &=& \frac{\xi_1(v) \langle v_1,v_1 \rangle \langle v_2,v_2 \rangle - \xi_2(v) \langle v_2,v_2 \rangle \langle v_1,v_2 \rangle }{\langle v_1,v_1 \rangle \langle v_2,v_2 \rangle - \langle v_1,v_2 \rangle^2}\:, \\
c_2(v) &=& \frac{\xi_2(v) \langle v_1,v_1 \rangle \langle v_2,v_2 \rangle - \xi_1(v) \langle v_1,v_1 \rangle \langle v_1,v_2 \rangle }{\langle v_1,v_1 \rangle \langle v_2,v_2 \rangle - \langle v_1,v_2 \rangle^2}\:. \label{linear_eq7}
\end{eqnarray}
The result now follows by plugging (\ref{linear_eq6}) and (\ref{linear_eq7}) in (\ref{linear_eq5}).
\end{proof}

As an application of Lemma \ref{linear_algebra_lemma}, we obtain the following asymptotic lower bound for eigenvalues of $-\Delta$ for rank $2$:
\begin{lemma} \label{rank2_eigenvalue}
Let $\X$ be a Riemannian symmetric space of compact type of rank $2$. Then the eigenvalues of $-\Delta$ satisfy
\begin{equation*}
\lambda_\mu^2 \gtrsim n_1(\mu)^2 + n_2(\mu)^2\:,\:\: \mu  \in \Lambda^+(U/K)\:,
\end{equation*}
where the implicit constant only depends on the intrinsic geometry of $\X$ and the coefficients $n_j$ are defined in terms of the inner product induced by the Killing form as,
\begin{equation*}
n_j(\mu):= \frac{\langle \mu, \beta_j \rangle}{\langle \beta_j, \beta_j \rangle} \in \N \cup \{0\}\:,\:\:j=1,2,\:\text{ where }\: \{\beta_1, \beta_2\} = \Sigma^+_s\:.
\end{equation*}
\end{lemma}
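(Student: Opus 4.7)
The plan is to apply Lemma \ref{linear_algebra_lemma} to $V = \mathfrak{a}^*$ (which is $2$-dimensional since $\operatorname{rank}(\X) = 2$) with basis $\{v_1, v_2\} = \{\beta_1, \beta_2\}$ and $v = \mu$. Since $\Sigma_s^+$ is a basis of $\mathfrak{a}^*$ the hypothesis of the lemma is met, and by the very definition of $n_j(\mu)$ one has $\xi_j(\mu) = n_j(\mu)$. The lemma then produces an explicit rational expression for $\langle \mu, \mu\rangle$ in terms of $n_1, n_2$ and the Gram entries $\langle \beta_j, \beta_k\rangle$. The denominator $\langle \beta_1, \beta_1\rangle\langle \beta_2, \beta_2\rangle - \langle \beta_1, \beta_2\rangle^2$ is strictly positive by Cauchy--Schwarz (strictly, because $\beta_1$ and $\beta_2$ are linearly independent).

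In the numerator, the two pure quadratic terms $n_1^2\,\langle\beta_1,\beta_1\rangle^2\langle\beta_2,\beta_2\rangle$ and $n_2^2\,\langle\beta_2,\beta_2\rangle^2\langle\beta_1,\beta_1\rangle$ are manifestly non-negative, and the cross term $-2 n_1 n_2\,\langle\beta_1,\beta_1\rangle\langle\beta_1,\beta_2\rangle\langle\beta_2,\beta_2\rangle$ is also non-negative, because $n_1, n_2 \geq 0$ (which comes from $\mu \in \Lambda^+(U/K)$) and $\langle \beta_1, \beta_2\rangle \leq 0$ by (\ref{negative_killing_form}). Dropping this cross term, I obtain
\[
\langle \mu, \mu \rangle \;\geq\; c\bigl(n_1(\mu)^2 + n_2(\mu)^2\bigr),
\]
where $c > 0$ depends only on the simple-root inner products, hence only on the intrinsic geometry of $\X$.

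It then remains to check that $2\langle \mu, \rho \rangle \geq 0$, so that the full eigenvalue satisfies $\lambda_\mu^2 = \langle \mu,\mu\rangle + 2\langle \mu, \rho\rangle \geq \langle\mu,\mu\rangle$. Expanding both vectors in the fundamental-weight basis $\{\omega_1, \omega_2\}$ dual to the coroots $\beta_j^\vee = 2\beta_j/\langle\beta_j,\beta_j\rangle$, one has $\mu = 2n_1\omega_1 + 2n_2\omega_2$ and $\rho = \omega_1 + \omega_2$, with all coefficients non-negative. A direct computation gives $\langle \omega_i, \omega_j \rangle = (C^{-1})_{ij}\,\langle \beta_j, \beta_j\rangle/2$, where $C$ is the Cartan matrix of the underlying root system; since $C^{-1}$ has non-negative entries (a classical fact, easily verifiable by hand in each of the rank-$2$ types $A_1 \times A_1, A_2, B_2, G_2$), all $\langle \omega_i, \omega_j\rangle \geq 0$, hence $\langle \mu, \rho\rangle \geq 0$, finishing the proof. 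The argument has no substantive obstacle; the main delicate point is controlling the sign of the cross term in the numerator, which is precisely what (\ref{negative_killing_form}) supplies, and keeping track of the positivity of $\langle \mu, \rho \rangle$.
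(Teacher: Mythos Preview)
Your proof is correct and follows essentially the same approach as the paper: apply Lemma~\ref{linear_algebra_lemma}, use (\ref{negative_killing_form}) to drop the cross term, and invoke $\langle \mu,\rho\rangle\ge 0$ to pass from $\langle\mu,\mu\rangle$ to $\lambda_\mu^2$. The only difference is that you justify $\langle\mu,\rho\rangle\ge 0$ via fundamental weights and non-negativity of the inverse Cartan matrix, whereas the paper simply asserts it; in fact a one-line argument suffices here, since $\rho$ is by definition a non-negative combination of positive roots and $\langle\mu,\beta\rangle\ge 0$ for every $\beta\in\Sigma^+$ by the very characterization of $\Lambda^+(U/K)$.
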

\begin{proof}
For $\mu \in \Lambda^+(U/K)$, we have by (\ref{eigenvalues}),
\begin{equation*}
\lambda^2_\mu = \langle \mu,\mu \rangle + 2 \langle \mu, \rho \rangle \ge \langle \mu,\mu \rangle\:.
\end{equation*}
Now $\{\beta_1,\beta_2\}$ forms a basis of $\mathfrak{a}^*$ where $\{\beta_1, \beta_2\} = \Sigma^+_s$. Then by Lemma \ref{linear_algebra_lemma}, we have
\begin{equation*}
\langle \mu,\mu \rangle = \frac{n_1(\mu)^2 \langle \beta_1,\beta_1 \rangle^2 \langle \beta_2,\beta_2 \rangle + n_2(\mu)^2 \langle \beta_2,\beta_2 \rangle^2 \langle \beta_1,\beta_1 \rangle - 2n_1(\mu) n_2(\mu) \langle \beta_1,\beta_1 \rangle \langle \beta_1,\beta_2 \rangle \langle \beta_2,\beta_2 \rangle }{\langle \beta_1,\beta_1 \rangle \langle \beta_2,\beta_2 \rangle - \langle \beta_1,\beta_2 \rangle^2}\:.
\end{equation*}
Now by (\ref{negative_killing_form}), as $\langle \beta_1, \beta_2 \rangle \le 0$, we have 
\begin{equation*}
\langle \mu,\mu \rangle \ge n_1(\mu)^2\frac{ \langle \beta_1,\beta_1 \rangle^2 \langle \beta_2,\beta_2 \rangle}{\langle \beta_1,\beta_1 \rangle \langle \beta_2,\beta_2 \rangle - \langle \beta_1,\beta_2 \rangle^2} + n_2(\mu)^2 \frac{\langle \beta_2,\beta_2 \rangle^2 \langle \beta_1,\beta_1 \rangle }{\langle \beta_1,\beta_1 \rangle \langle \beta_2,\beta_2 \rangle - \langle \beta_1,\beta_2 \rangle^2}\:.
\end{equation*} 
This completes the proof of Lemma \ref{rank2_eigenvalue}.
\end{proof}

We are now in a position to present the proof of Theorem \ref{result1}.
\begin{proof}[Proof of Theorem \ref{result1}]
It suffices to prove that the maximal estimate
\begin{equation} \label{result1_pf_eq1}
\left\|\sup_{0 \le t < 2\pi} \left|e^{-it\psi\left(\sqrt{-\Delta}\right)} f \right|\right\|_{L^2(\X)} \lesssim \|f\|_{H^\alpha(\X)}\:,
\end{equation}
holds for all 
\begin{equation} \label{alpha_condition} 
\alpha > \begin{cases}
	 \frac{1}{2}  &\text{ if } rank(\X)=1\:, \\
	 1  &\text{ if } rank(\X)=2\:,
	\end{cases}
\end{equation}
and all $f \in C^\infty(\X)$. To prove (\ref{result1_pf_eq1}), it suffices to consider its linearization,
\begin{equation*}
T_\psi f(x):= e^{-it(x)\psi\left(\sqrt{-\Delta}\right)} f(x)\:,\:\: x \in \X\:,
\end{equation*}
where $t(\cdot):\X \to [0,2\pi)$ is a measurable function and obtain the boundedness result
\begin{equation} \label{result1_pf_eq2}
\left\|T_\psi f\right\|_{L^2(\X)} \lesssim \|f\|_{H^\alpha(\X)}\:,
\end{equation}
for all $\alpha$ satisfying (\ref{alpha_condition}) and all $f \in C^\infty(\X)$. For $f \in C^\infty(\X)$, by the expression (\ref{Fourier_series}), we have
\begin{equation*}
T_\psi f(x) = \sum_{\mu \in \Lambda^+(U/K)} e^{-it(x)\psi\left(\lambda_\mu\right)}\sum_{j=1}^{d_\mu} \what{f}(\mu,j)\:Y_{\mu,j}(x)\:,\:\: x \in \X\:. 
\end{equation*}
Thus
\begin{eqnarray*}
\left\|T_\psi f\right\|_{L^2(\X)} &=& \left\|\sum_{\mu \in \Lambda^+(U/K)} e^{-it(\cdot)\psi\left(\lambda_\mu\right)}\sum_{j=1}^{d_\mu} \what{f}(\mu,j)\:Y_{\mu,j}\right\|_{L^2(\X)} \\
& \le & \sum_{\mu \in \Lambda^+(U/K)} \left\|\sum_{j=1}^{d_\mu} \what{f}(\mu,j)\:Y_{\mu,j}\right\|_{L^2(\X)}\:.
\end{eqnarray*}
Now by the Plancherel theorem,
\begin{equation*}
\sum_{\mu \in \Lambda^+(U/K)} \left\|\sum_{j=1}^{d_\mu} \what{f}(\mu,j)\:Y_{\mu,j}\right\|_{L^2(\X)} = \sum_{\mu \in \Lambda^+(U/K)} \left(\sum_{j=1}^{d_\mu} \left|\what{f}(\mu,j)\right|^2\right)^{\frac{1}{2}}\:.
\end{equation*}
Then by the Cauchy-Schwarz inequality, we have for any $\alpha \ge 0$,
\begin{eqnarray*}
&&\sum_{\mu \in \Lambda^+(U/K)} \left(\sum_{j=1}^{d_\mu} \left|\what{f}(\mu,j)\right|^2\right)^{\frac{1}{2}}\\
&=& \sum_{\mu \in \Lambda^+(U/K)} {(1+\lambda^2_{\mu})}^{-\frac{\alpha}{2}} \left({(1+\lambda^2_{\mu})}^\alpha \sum_{j=1}^{d_\mu} \left|\what{f}(\mu,j)\right|^2\right)^{\frac{1}{2}} \\
& \le & \left(\sum_{\mu \in \Lambda^+(U/K)} {(1+\lambda^2_{\mu})}^{-\alpha}\right)^{\frac{1}{2}} \left(\sum_{\mu \in \Lambda^+(U/K)} {(1+\lambda^2_{\mu})}^\alpha \sum_{j=1}^{d_\mu} \left|\what{f}(\mu,j)\right|^2 \right)^{\frac{1}{2}} \\
&=& \left(\sum_{\mu \in \Lambda^+(U/K)} {(1+\lambda^2_{\mu})}^{-\alpha}\right)^{\frac{1}{2}} \|f\|_{H^\alpha(\X)}\:.
\end{eqnarray*}
Thus to prove (\ref{result1_pf_eq2}), it suffices to prove that for $\alpha$ satisfying (\ref{alpha_condition}), we have
\begin{equation} \label{result1_pf_eq3}
\sum_{\mu \in \Lambda^+(U/K)} {(1+\lambda^2_{\mu})}^{-\alpha} < \infty\:\:.
\end{equation}
Now in the case when rank$(\X)=1$, $\Lambda^+(U/K)=\N \cup \{0\}$ and by Remark \ref{rank1_eigenvalue_remark}, the eigenvalues satisfy $\lambda^2_n \gtrsim n^2\:,\: n \in \N \cup \{0\}$, with the implicit constant only depending on the intrinsic geometry of $\X$. Thus we get that
\begin{equation*}
\sum_{\mu \in \Lambda^+(U/K)} {(1+\lambda^2_{\mu})}^{-\alpha} \lesssim 1+\sum_{n=1}^\infty \frac{1}{n^{2\alpha}}\:,
\end{equation*}
and the last sum converges if $\alpha >1/2$. This yields (\ref{result1_pf_eq3}) when rank$(\X)=1$.

\medskip

For the case when rank$(\X)=2$, by Lemma \ref{rank2_eigenvalue},
\begin{eqnarray*}
\sum_{\mu \in \Lambda^+(U/K)} {(1+\lambda^2_{\mu})}^{-\alpha} &\lesssim & \sum_{\mu \in \Lambda^+(U/K)} \frac{1}{{(1+n_1(\mu)^2+n_2(\mu)^2)}^{\alpha}} \\
& \le & \sum_{(n_1,n_2) \in \left(\N \cup \{0\}\right)^2} \frac{1}{{(1+n^2_1+n^2_2)}^{\alpha}}\\
& \le & 1+ \sum_{(n_1,n_2) \in \left(\N \cup \{0\}\right)^2 \setminus \{(0,0)\}} \int_{{(n^2_1+n^2_2)}^{1/2}-1}^{{(n^2_1+n^2_2)}^{1/2}} \frac{dr}{{(1+r^2)}^\alpha}\:.
\end{eqnarray*}
We now consider for $r>0$, the set
\begin{equation*}
\mathscr{A}_{r,r+1} :=\left\{(n_1,n_2) \in \left(\N \cup \{0\}\right)^2 \mid r \le {(n^2_1+n^2_2)}^{\frac{1}{2}} \le r+1\right\}\:.
\end{equation*}
Thus by Lemma \ref{lattice_point_counting}, we get for all $r \ge 1$,
\begin{equation} \label{result1_pf_eq4}
\#\mathscr{A}_{r,r+1}  \lesssim r\:\:.
\end{equation}
Hence by the Fubini-Tonelli theorem and the estimate (\ref{result1_pf_eq4}), we get that
\begin{eqnarray*}
\sum_{(n_1,n_2) \in \left(\N \cup \{0\}\right)^2 \setminus \{(0,0)\}} \int_{{(n^2_1+n^2_2)}^{1/2}-1}^{{(n^2_1+n^2_2)}^{1/2}} \frac{dr}{{(1+r^2)}^\alpha} &=& \int_0^\infty \sum_{(n_1,n_2) \in \mathscr{A}_{r,r+1}} \frac{dr}{{(1+r^2)}^\alpha} \\
&=&  \int_0^\infty  \frac{\left(\#\mathscr{A}_{r,r+1}\right)dr}{{(1+r^2)}^\alpha} \\
& \lesssim & 1 \:+\:\int_1^\infty  \frac{r\:dr}{{(1+r^2)}^{\alpha}} \:. 
\end{eqnarray*}
The last integral converges if $\alpha>1$. Thus for $\alpha > 1$, we have (\ref{result1_pf_eq3}) and this completes the proof of Theorem \ref{result1}.
\end{proof}

\section{The transference principle}
From this point onwards, we will be working with $K$-biinvariant data on rank one Riemannian symmetric spaces of compact type $\X$. In this section, we will prove the transference principle Theorem \ref{transference_principle}, but first we see the following result on different scales of Sobolev norms:
\begin{lemma} \label{sobolev_comparison}
Let $f \in C^\infty(\X)$ with spherical Fourier series $f=\displaystyle\sum_{n=N}^\infty a_n \tilde{Z}_n$ (as in (\ref{spherical_fourier_series})), for some $N \in \N$. Then for $0 \le \beta_1 < \beta_2 < \infty$, we have
\begin{equation*}
\|f\|_{H^{\beta_1}(X)} \lesssim N^{-(\beta_2-\beta_1)}\:\|f\|_{H^{\beta_2}(X)}\:,
\end{equation*}
with the implicit constant depending only on the intrinsic geometry of $\X$.
\end{lemma}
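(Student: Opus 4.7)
The proof will be a short, direct computation using the Plancherel-type expression for the $K$-biinvariant Sobolev norm together with the eigenvalue asymptotics recorded in Remark \ref{rank1_eigenvalue_remark}.

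My plan is to start from the identity
\begin{equation*}
\|f\|_{H^{\beta_1}(\X)}^2 \;=\; \sum_{n=N}^{\infty} \bigl(1+\lambda_n^2\bigr)^{\beta_1}\,|a_n|^2 \;=\; \sum_{n=N}^{\infty} \bigl(1+\lambda_n^2\bigr)^{-(\beta_2-\beta_1)} \bigl(1+\lambda_n^2\bigr)^{\beta_2}\,|a_n|^2,
\end{equation*}
which is legitimate because the spherical Fourier series of $f$ is supported on frequencies $n \ge N$. Since $\beta_2-\beta_1 > 0$, the factor $(1+\lambda_n^2)^{-(\beta_2-\beta_1)}$ is decreasing in $\lambda_n^2$, so it is maximized at $n=N$.

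The next step is to control this factor uniformly for $n \ge N$. By Remark \ref{rank1_eigenvalue_remark}, $\lambda_n^2 \asymp n^2$ with implicit constant depending only on the intrinsic geometry of $\X$; since $N \in \N$, for every $n \ge N \ge 1$ we get $1+\lambda_n^2 \gtrsim n^2 \ge N^2$, hence
\begin{equation*}
\bigl(1+\lambda_n^2\bigr)^{-(\beta_2-\beta_1)} \;\lesssim\; N^{-2(\beta_2-\beta_1)}.
\end{equation*}
Substituting this pointwise bound into the sum and pulling the constant out yields
\begin{equation*}
\|f\|_{H^{\beta_1}(\X)}^2 \;\lesssim\; N^{-2(\beta_2-\beta_1)} \sum_{n=N}^{\infty} \bigl(1+\lambda_n^2\bigr)^{\beta_2}\,|a_n|^2 \;=\; N^{-2(\beta_2-\beta_1)}\,\|f\|_{H^{\beta_2}(\X)}^2,
\end{equation*}
and taking square roots gives the claimed inequality (the square root of $N^{-2(\beta_2-\beta_1)}$ producing exactly the advertised factor $N^{-(\beta_2-\beta_1)}$).

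There is essentially no main obstacle here: the lemma is a frequency-localization estimate and the only nontrivial ingredient is the lower bound $\lambda_n^2 \gtrsim n^2$, which is already available from Remark \ref{rank1_eigenvalue_remark}. The only point requiring a touch of care is correctly tracking the squaring of the norm, which explains why the exponent in the final inequality is $-(\beta_2-\beta_1)$ rather than $-2(\beta_2-\beta_1)$.
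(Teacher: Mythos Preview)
Your proof is correct and follows essentially the same approach as the paper: both start from the Plancherel expression for the $H^{\beta_1}$-norm, insert the factor $(1+\lambda_n^2)^{-(\beta_2-\beta_1)}(1+\lambda_n^2)^{\beta_2}$, and then invoke Remark~\ref{rank1_eigenvalue_remark} to bound $(1+\lambda_n^2)^{-(\beta_2-\beta_1)} \lesssim N^{-2(\beta_2-\beta_1)}$ for $n\ge N$. The only cosmetic difference is that the paper passes through the intermediate bound $(1+\lambda_n^2)^{-(\beta_2-\beta_1)} \le \lambda_n^{-2(\beta_2-\beta_1)} \lesssim n^{-2(\beta_2-\beta_1)}$ before specializing to $n\ge N$, whereas you go directly to $1+\lambda_n^2 \gtrsim N^2$.
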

\begin{proof}
The result follows from a straightforward computation. Indeed,
\begin{eqnarray*}
\|f\|_{H^{\beta_1}(X)} &=& \left(\sum_{n=N}^\infty {\left(1+\lambda^2_n\right)}^{\beta_1}\:{|a_n|}^2\:\right)^{\frac{1}{2}} \\
&=& \left(\sum_{n=N}^\infty {\left(1+\lambda^2_n\right)}^{-(\beta_2-\beta_1)}\:{\left(1+\lambda^2_n\right)}^{\beta_2}\:{|a_n|}^2\:\right)^{\frac{1}{2}} \\
&\le & \left(\sum_{n=N}^\infty \lambda^{-2(\beta_2-\beta_1)}_n\:{\left(1+\lambda^2_n\right)}^{\beta_2}\:{|a_n|}^2\:\right)^{\frac{1}{2}}\:.
\end{eqnarray*}
Now using Remark \ref{rank1_eigenvalue_remark}, we get that
\begin{eqnarray*}
\left(\sum_{n=N}^\infty \lambda^{-2(\beta_2-\beta_1)}_n\:{\left(1+\lambda^2_n\right)}^{\beta_2}\:{|a_n|}^2\:\right)^{\frac{1}{2}} &\lesssim & \left(\sum_{n=N}^\infty n^{-2(\beta_2-\beta_1)}\:{\left(1+\lambda^2_n\right)}^{\beta_2}\:{|a_n|}^2\:\right)^{\frac{1}{2}} \\
& \le & N^{-(\beta_2-\beta_1)}\:\|f\|_{H^{\beta_2}(X)}\:.
\end{eqnarray*}
\end{proof}
We now present the proof of Theorem \ref{transference_principle}.
\begin{proof}[Proof of Theorem \ref{transference_principle}]
Without loss of generality, let us assume that for some $\alpha_0>0$ and some $p \in [1,\infty]$, the maximal estimate 
\begin{equation} \label{transference_pf_eq1}
\left\|\sup_{0 \le t < 2\pi} \left|e^{-it\psi_1\left(\sqrt{-\Delta}\right)} f \right|\right\|_{L^p(\X)} \lesssim {\|f\|}_{H^\alpha(\X)}\:,
\end{equation}
holds for all $\alpha >\alpha_0$ and all $K$-biinvariant $f \in C^\infty(\X)$. We have to show that the maximal estimate 
\begin{equation} \label{transference_pf_eq2}
\left\|\sup_{0 \le t < 2\pi} \left|e^{-it\psi_2\left(\sqrt{-\Delta}\right)} f \right|\right\|_{L^p(\X)} \lesssim {\|f\|}_{H^\alpha(\X)}\:,
\end{equation}
also holds for all $\alpha >\alpha_0$ and all $K$-biinvariant $f \in C^\infty(\X)$.

By the hypothesis there exist positive constants $R$ and $C$ such that 
\begin{equation} \label{transference_pf_eq3}
\left|\psi_1(r)-\psi_2(r)\right| \le C\:,\:\: r \in (R, \infty)\:.
\end{equation} 
Let us choose and fix $f \in C^\infty(\X)$. Correspondingly, we have the spherical Fourier series (as in (\ref{spherical_fourier_series})),
\begin{equation*}
f = \sum_{n=0}^\infty a_n \tilde{Z}_n\:.
\end{equation*}
Let $m_0$ be the smallest positive integer such that $2^{m_0}> R$. We decompose $f$ as follows,
\begin{equation*} 
f=f_{m_0} + \sum_{m=m_0+1}^\infty f_m\:,
\end{equation*}
where 
\begin{equation*}
f_{m_0}:= \sum_{n=0}^{2^{m_0+1}-1} a_n \tilde{Z}_n \:, \text{ and for } m \ge m_0+1,\:\:\: f_m :=  \sum_{n=2^m}^{2^{m+1}-1} a_n \tilde{Z}_n\:.
\end{equation*}
Invoking the above decomposition of $f$, we get that
\begin{eqnarray} \label{transference_pf_eq4}
&&\left\|\sup_{0 \le t < 2\pi} \left|e^{-it\psi_2\left(\sqrt{-\Delta}\right)} f \right|\right\|_{L^p(\X)}\nonumber\\
 & \le & \left\|\sup_{0 \le t < 2\pi} \left|e^{-it\psi_2\left(\sqrt{-\Delta}\right)} f_{m_0} \right|\right\|_{L^p(\X)} + \sum_{m=m_0+1}^\infty \left\|\sup_{0 \le t < 2\pi} \left|e^{-it\psi_2\left(\sqrt{-\Delta}\right)} f_m \right|\right\|_{L^p(\X)}\:.
\end{eqnarray}
Since $f$ is $K$-biinvariant, so is each of its pieces $f_m$ above, their propagations $e^{-it\psi_j\left(\sqrt{-\Delta}\right)} f_m$, for $j=1,2$ and also the corresponding maximal functions.  

By the $L^\infty$ bound given in (\ref{Linfty_bound}) and the Cauchy-Schwarz inequality we have for all $t \in [0,2\pi)$, all $\theta \in [0,\pi]$ and all $\alpha>\alpha_0$,
\begin{eqnarray*}
\left|e^{-it\psi_2\left(\sqrt{-\Delta}\right)} f_{m_0}(\theta) \right| &=& \left|\sum_{n=0}^{2^{m_0+1}-1} e^{-it\psi_2\left(\lambda_n\right)} a_n \tilde{Z}_n \right| \\
&\le & \sum_{n=0}^{2^{m_0+1}-1} |a_n|\:\left|\tilde{Z}_n(\theta)\right|\\
&\lesssim & \|f_{m_0}\|_{H^{\alpha}(\X)} \\
& \le & \|f\|_{H^{\alpha}(\X)}\:,
\end{eqnarray*}
from which it follows that
\begin{equation} \label{transference_pf_eq5}
\left\|\sup_{0 \le t < 2\pi} \left|e^{-it\psi_2\left(\sqrt{-\Delta}\right)} f_{m_0} \right|\right\|_{L^p(\X)} \lesssim  \|f\|_{H^{\alpha}(\X)}\:.
\end{equation}
Next for each $m \ge m_0+1$, we have for all $\theta \in [0,\pi]$ and all $t \in [0,2\pi)$,
\begin{eqnarray*}
&&\left|e^{-it\psi_1\left(\sqrt{-\Delta}\right)} f_m(\theta)   - e^{-it\psi_2\left(\sqrt{-\Delta}\right)} f_m(\theta) \right| \\
&=& \left|\sum_{n=2^m}^{2^{m+1}-1} \left(e^{-it\psi_1\left(\lambda_n\right)} - e^{-it\psi_2\left(\lambda_n\right)}\right) a_n \tilde{Z}_n(\theta) \right|\\
&=& \left|\sum_{n=2^m}^{2^{m+1}-1} e^{-it\psi_1\left(\lambda_n\right)}\left(1 - e^{-it\left(\psi_2\left(\lambda_n\right)-\psi_1\left(\lambda_n\right)\right)}\right) a_n \tilde{Z}_n(\theta) \right|\:.
\end{eqnarray*}
Now using the smoothness of the exponential in time, we expand the exponential in the Taylor series to get,
\begin{eqnarray*}
&&\left|\sum_{n=2^m}^{2^{m+1}-1} e^{-it\psi_1\left(\lambda_n\right)}\left(1 - e^{-it\left(\psi_2\left(\lambda_n\right)-\psi_1\left(\lambda_n\right)\right)}\right) a_n \tilde{Z}_n(\theta) \right| \\
&=& \left|\sum_{n=2^m}^{2^{m+1}-1} e^{-it\psi_1\left(\lambda_n\right)}\left[1 - \sum_{j=0}^\infty \frac{\left\{-it\left(\psi_2\left(\lambda_n\right)-\psi_1\left(\lambda_n\right)\right)\right\}^j}{j!}\right]
a_n \tilde{Z}_n(\theta) \right|\\
& \le & \sum_{j=1}^\infty \frac{t^j}{j!} \left|\sum_{n=2^m}^{2^{m+1}-1}e^{-it\psi_1\left(\lambda_n\right)} \left[\psi_2(\lambda_n)-\psi_1(\lambda_n)\right]^j a_n\:\tilde{Z}_n(\theta)\right|\:.
\end{eqnarray*}
For each $j \in \N$ and $n \in \left\{2^m, \dots, 2^{m+1}-1\right\}$, setting $a_{j,n}:= \left[\psi_2(\lambda_n)-\psi_1(\lambda_n)\right]^j a_n$, we define,
\begin{equation*}
g_{j,m}:= \sum_{n=2^m}^{2^{m+1}-1} a_{j,n} \tilde{Z}_n\;.
\end{equation*}
Then the computation above yields, for each $m \ge m_0+1$, for all $\theta \in [0,\pi]$ and all $t \in [0,2\pi)$,
\begin{equation*}
\left|e^{-it\psi_1\left(\sqrt{-\Delta}\right)} f_m(\theta)   - e^{-it\psi_2\left(\sqrt{-\Delta}\right)} f_m(\theta) \right| \le  \sum_{j=1}^\infty \frac{t^j}{j!} \left|e^{-it\psi_1\left(\sqrt{-\Delta}\right)} g_{j,m}(\theta)\right|\:,
\end{equation*}
which in turn implies
\begin{eqnarray} \label{transference_pf_eq6}
&&\left\|\sup_{0\le t<2\pi}\left|e^{-it\psi_1\left(\sqrt{-\Delta}\right)} f_m   - e^{-it\psi_2\left(\sqrt{-\Delta}\right)} f_m \right|\right\|_{L^p(\X)} \nonumber\\
&\le & \sum_{j=1}^\infty \frac{1}{j!} \left\| \sup_{0\le t<2\pi} \left|e^{-it\psi_1\left(\sqrt{-\Delta}\right)} g_{j,m}\right|\right\|_{L^p(\X)}\:.
\end{eqnarray}
For any $\varepsilon>0$, set $\alpha_1:=\alpha_0+\frac{\varepsilon}{2}$ and $\alpha_2:=\alpha_0+\varepsilon$. Then applying the assumption (\ref{transference_pf_eq1}) and Lemma \ref{sobolev_comparison}, we get that
\begin{equation*}
\left\| \sup_{0\le t<2\pi} \left|e^{-it\psi_1\left(\sqrt{-\Delta}\right)} g_{j,m}\right|\right\|_{L^p(\X)} \lesssim \|g_{j,m}\|_{H^{\alpha_1}(\X)} \lesssim 2^{-\frac{m\varepsilon}{2}}  \|g_{j,m}\|_{H^{\alpha_2}(\X)}\:,
\end{equation*}
which upon plugging in (\ref{transference_pf_eq6}) yields,
\begin{equation} \label{transference_pf_eq7}
\left\|\sup_{0\le t<2\pi}\left|e^{-it\psi_1\left(\sqrt{-\Delta}\right)} f_m   - e^{-it\psi_2\left(\sqrt{-\Delta}\right)} f_m \right|\right\|_{L^p(\X)} \lesssim 2^{-\frac{m\varepsilon}{2}} \sum_{j=1}^\infty \frac{1}{j!} \|g_{j,m}\|_{H^{\alpha_2}(\X)}\:.
\end{equation}
Recalling the definition of $g_{j,m}$ along with (\ref{transference_pf_eq3}), we note that
\begin{equation*}
\|g_{j,m}\|_{H^{\alpha_2}(\X)} \le C^j\:\|f_m\|_{H^{\alpha_2}(\X)}\:,
\end{equation*}
which upon plugging in (\ref{transference_pf_eq7}) gives,
\begin{eqnarray} \label{transference_pf_eq8}
\left\|\sup_{0\le t<2\pi}\left|e^{-it\psi_1\left(\sqrt{-\Delta}\right)} f_m   - e^{-it\psi_2\left(\sqrt{-\Delta}\right)} f_m \right|\right\|_{L^p(\X)} &\lesssim & 2^{-\frac{m\varepsilon}{2}} \|f_m\|_{H^{\alpha_2}(\X)}\left(\sum_{j=1}^\infty \frac{C^j}{j!}\right)\nonumber\\
& \lesssim &  2^{-\frac{m\varepsilon}{2}} \|f_m\|_{H^{\alpha_2}(\X)}\:.
\end{eqnarray}
Hence for each $m \ge m_0+1$, applying (\ref{transference_pf_eq1}) to $f_m$ and combining it with (\ref{transference_pf_eq8}), we get that
\begin{eqnarray*}
&&\left\|\sup_{0 \le t < 2\pi} \left|e^{-it\psi_2\left(\sqrt{-\Delta}\right)} f_m \right|\right\|_{L^p(\X)} \\
&\le & \left\|\sup_{0\le t<2\pi}\left|e^{-it\psi_1\left(\sqrt{-\Delta}\right)} f_m   - e^{-it\psi_2\left(\sqrt{-\Delta}\right)} f_m \right|\right\|_{L^p(\X)} + \left\|\sup_{0 \le t < 2\pi} \left|e^{-it\psi_1\left(\sqrt{-\Delta}\right)} f_m \right|\right\|_{L^p(\X)} \\
& \lesssim &  2^{-\frac{m\varepsilon}{2}} \|f_m\|_{H^{\alpha_2}(\X)} + \|f_m\|_{H^{\alpha_1}(\X)}\:.
\end{eqnarray*}
Then a further application of Lemma \ref{sobolev_comparison} gives
\begin{equation*}
\left\|\sup_{0 \le t < 2\pi} \left|e^{-it\psi_2\left(\sqrt{-\Delta}\right)} f_m \right|\right\|_{L^p(\X)} \lesssim 2^{-\frac{m\varepsilon}{2}} \|f_m\|_{H^{\alpha_2}(\X)} \le 2^{-\frac{m\varepsilon}{2}} \|f\|_{H^{\alpha_2}(\X)}\:,
\end{equation*}
which upon plugging in (\ref{transference_pf_eq4}) and combined with (\ref{transference_pf_eq5}) yields
\begin{eqnarray*}
&&\left\|\sup_{0 \le t < 2\pi} \left|e^{-it\psi_2\left(\sqrt{-\Delta}\right)} f \right|\right\|_{L^p(\X)}\\
 & \le & \left\|\sup_{0 \le t < 2\pi} \left|e^{-it\psi_2\left(\sqrt{-\Delta}\right)} f_{m_0} \right|\right\|_{L^p(\X)} + \sum_{m=m_0+1}^\infty \left\|\sup_{0 \le t < 2\pi} \left|e^{-it\psi_2\left(\sqrt{-\Delta}\right)} f_m \right|\right\|_{L^p(\X)} \\
&\lesssim & \left(1+\sum_{m=m_0+1}^\infty 2^{-\frac{m\varepsilon}{2}} \right)  \|f\|_{H^{\alpha_2}(\X)} \\
&\lesssim &  \|f\|_{H^{\alpha_2}(\X)}\:.
\end{eqnarray*}
As $\varepsilon>0$ and $K$-biinvariant $f \in C^\infty(\X)$ were arbitrarily chosen, (\ref{transference_pf_eq2}) follows. This completes the proof of Theorem \ref{transference_principle}.
\end{proof}

\section{Sufficiency of $\alpha >1/3$ for $K$-biinvariant data}
In this section, we will prove Theorem \ref{result2}.

The unit circle $\T$ will be identified with the interval $[0,2\pi)$ equipped with the normalized Lebesgue measure. We first obtain the following variant of the Strichartz estimate on $\T$:
\begin{lemma} \label{Strichartz}
Let $N \ge 1$ be an integer and $\sa =\left\{a_n\right\}_{n=0}^{N-1} \subset \C$ be a sequence. Then for any $\varepsilon>0$,
\begin{equation*}
\left\|\sum_{n=0}^{N-1} a_n e^{-it\lambda^2_n}e^{\pm in\theta} \right\|_{L^6(\T \times \T)} \lesssim_\varepsilon N^\varepsilon \|\sa\|_{\ell^2}\:, 
\end{equation*}
where $\lambda^2_n$ is of the form appearing in Table \ref{table:2}.
\end{lemma}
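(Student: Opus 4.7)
The plan is to reduce to Bourgain's classical $L^6$ Strichartz estimate on $\T^2$ and then to run the standard counting argument, with the number-theoretic input playing the same role as Lemma \ref{lattice_point_counting}. By conjugation symmetry I may treat only the $+$ sign. Writing $\lambda_n^2 = s^2 n^2 + cn$ with $c := s(\sigma+\tau+1)$, I factor $e^{-it\lambda_n^2}e^{in\theta} = e^{-i(s^2 t)n^2}\,e^{in(\theta-ct)}$. The translation $\theta \mapsto \theta-ct$ preserves the normalized Lebesgue measure on $\T$ for each fixed $t$, and since $s\in\{1,2\}$ is a positive integer by \eqref{value_of_s}, the $s^2$-fold covering $t \mapsto s^2 t \pmod{2\pi}$ preserves integrals of $2\pi$-periodic functions against the normalized Lebesgue measure on $\T$. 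Consequently
\begin{equation*}
\left\|\sum_{n=0}^{N-1} a_n e^{-it\lambda_n^2} e^{in\theta}\right\|_{L^6(\T\times\T)} = \|g\|_{L^6(\T\times\T)},\qquad g(u,\theta) := \sum_{n=0}^{N-1} a_n e^{-iun^2}e^{in\theta},
\end{equation*}
so it suffices to prove the estimate in the model case $\lambda_n = n$.

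Next I apply Parseval. Since $|g|^6 = g^3\,\overline{g^3}$, one has $\|g\|_{L^6}^6 = \|g^3\|_{L^2}^2$, and expanding
\begin{equation*}
g^3(u,\theta) = \sum_{n_1,n_2,n_3} a_{n_1}a_{n_2}a_{n_3}\, e^{i(n_1+n_2+n_3)\theta}\,e^{-iu(n_1^2+n_2^2+n_3^2)}
\end{equation*}
Plancherel on $\T^2$ then yields
\begin{equation*}
\|g\|_{L^6(\T\times\T)}^6 = \sum_{k,\ell\in\Z}\left|\sum_{(n_1,n_2,n_3)\in S(k,\ell)} a_{n_1}a_{n_2}a_{n_3}\right|^2,
\end{equation*}
where $S(k,\ell) := \bigl\{(n_1,n_2,n_3)\in\{0,\dots,N-1\}^3 : n_1+n_2+n_3 = k,\; n_1^2+n_2^2+n_3^2 = \ell\bigr\}$. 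Cauchy--Schwarz on the inner sum, combined with the fact that every ordered triple lies in exactly one $S(k,\ell)$, then gives
\begin{equation*}
\|g\|_{L^6(\T\times\T)}^6 \;\le\; \Bigl(\sup_{k,\ell}\#S(k,\ell)\Bigr)\,\|\sa\|_{\ell^2}^6.
\end{equation*}

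Everything therefore reduces to showing $\sup_{k,\ell}\#S(k,\ell)\lesssim_\varepsilon N^\varepsilon$, and this is where I expect the main obstacle to lie: the naive bound, obtained by fixing $n_3$ and solving a quadratic for the unordered pair $\{n_1,n_2\}$, only gives $\#S(k,\ell)\lesssim N$, which would leak an unwanted factor of $N^{1/6}$. The required gain of $N^{1-\varepsilon}$ comes from a number-theoretic reduction in the spirit of Lemma \ref{lattice_point_counting}: the substitution $m_j := 3n_j - k$ forces $m_1+m_2+m_3=0$, and the two constraints defining $S(k,\ell)$ collapse, after multiplication by $4$, to the single diophantine equation
\begin{equation*}
(2m_1 + m_2)^2 + 3 m_2^2 \;=\; 18\ell - 6k^2,
\end{equation*}
which is a norm form of the Eisenstein integers (a principal ideal domain of class number one). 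By the classical theory of binary quadratic forms the number of integer solutions is bounded by a constant multiple of the divisor function $d(M)$ of the right-hand side $M$, and \eqref{asymptotics} gives $d(M)\lesssim_\varepsilon M^\varepsilon$. Since $|M|\lesssim N^2$ the bound $\#S(k,\ell)\lesssim_\varepsilon N^\varepsilon$ follows, and the lemma is concluded after absorbing the sixth root into the $\varepsilon$.
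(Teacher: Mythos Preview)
Your proof is correct and follows the same outline as the paper's: Plancherel on $\T^2$, Cauchy--Schwarz, and a divisor-type bound on the number of triples with prescribed sum and sum of squares. The only differences are that you reduce to the model case $\lambda_n^2=n^2$ by a change of variables in $(t,\theta)$ before Plancherel (the paper instead applies Plancherel first and then observes the combinatorial identity $r^{(\ell)}_{u,v}=r^{(0)}_{u,\,v-s\ell u}$), and that you spell out the counting via the norm form $(2m_1+m_2)^2+3m_2^2$ and the divisor bound \eqref{asymptotics}, whereas the paper simply cites \cite[Proposition~2.36]{Bourgain-GAFA} for this step.
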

\begin{proof}
By reflection $\theta \mapsto -\theta$, it is enough to consider $e^{-in\theta}$. As presented in (\ref{rank1_eigenvalue_form}), the general form of  $\lambda^2_n$ is given by,
\begin{equation*} 
\lambda^2_n =n(n+\sigma+\tau+1)\:,\:\:n \in \N \cup \{0\}\:,
\end{equation*}
where the values of $\sigma$ and $\tau$ are given in Table \ref{table:2}. Thus we note that while working in either of the cases, we have
\begin{equation*} 
\lambda^2_n =n(n+\ell)\:,\:\:n \in \N \cup \{0\}\:,
\end{equation*}
for some fixed non-negative integer $\ell$. Via Plancherel theorem, the $L^6$-norm appearing in the statement of Lemma \ref{Strichartz} can be written as,
\begin{equation*} 
\left\|\sum_{n=0}^{N-1} a_n e^{-itn(n+\ell)}e^{- in\theta} \right\|^6_{L^6(\T \times \T)} = C \sum_{u,v} \left|\sum_{\substack{n_1+n_2+n_3=u \\
n_1(n_1+\ell)+n_2(n_2+\ell)+n_3(n_3+\ell)=v}}a_{n_1}a_{n_2}a_{n_3}\right|^2\:,
\end{equation*}
for a positive constant $C$. We note that as $0 \le n_1,n_2,n_3 \le N-1$, we have that $u \lesssim N$ and $v \lesssim N^2$. Setting 
\begin{equation*}
r^{(\ell)}_{u,v} := \#\left\{(n_1,n_2,n_3) \in [0,N]^3 \mid \sum_{j=1}^3 n_j=u, \sum_{j=1}^3 n_j(n_j+\ell)=v \right\}\:,
\end{equation*}
we note by the Cauchy-Schwarz inequality that 
\begin{eqnarray*}
&&\sum_{u,v} \left|\sum_{\substack{n_1+n_2+n_3=u \\
n_1(n_1+\ell)+n_2(n_2+\ell)+n_3(n_3+\ell)=v}}a_{n_1}a_{n_2}a_{n_3}\right|^2\\ 
& \lesssim & \sum_{u,v} r^{(\ell)}_{u,v} \sum_{\substack{n_1+n_2+n_3=u \\
n_1(n_1+\ell)+n_2(n_2+\ell)+n_3(n_3+\ell)=v}}|a_{n_1}|^2\:|a_{n_2}|^2\:|a_{n_3}|^2 \\
& \le & \left(\max_{u,v} r^{(\ell)}_{u,v}\right) \|\sa\|^6_{\ell^2}\:.
\end{eqnarray*}
Thus it suffices to prove that for all $u,v$ as above and any $\varepsilon>0$,
\begin{equation*}
r^{(\ell)}_{u,v} \lesssim_\varepsilon N^\varepsilon\:.
\end{equation*}
Now for $\ell=0$, the proof of \cite[Proposition 2.36]{Bourgain-GAFA} yields
\begin{equation*}
r^{(0)}_{u,v} \lesssim_\varepsilon N^\varepsilon\:.
\end{equation*}
Then for $\ell \ge 1$, the same estimate follows from the simple observation that $r^{(\ell)}_{u,v}=r^{(0)}_{u,v-\ell u}$. This completes the proof of Lemma \ref{Strichartz}.
\end{proof}

As a consequence of Lemma \ref{Strichartz}, we obtain the following maximal estimate on $\T$:
\begin{lemma} \label{max_estimate_circle}
Let $N \ge 1$ be an integer and $\sa =\left\{a_n\right\}_{n=0}^{N-1} \subset \C$ be a sequence. Then for any $\varepsilon>0$,
\begin{equation*}
\left\|\sup_{t \in \T}\left|\sum_{n=0}^{N-1} a_n e^{-it\lambda^2_n}e^{\pm in\theta}\right| \right\|_{L^6(\T )} \lesssim_\varepsilon N^{\frac{1}{3}+\varepsilon} \|\sa\|_{\ell^2}\:, 
\end{equation*}
where $\lambda^2_n$ is of the form appearing in Table \ref{table:2}.
\end{lemma}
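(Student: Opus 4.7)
The plan is to derive the maximal estimate from the Strichartz estimate of Lemma \ref{Strichartz} by a standard Sobolev/Nikolskii embedding in the time variable. For fixed $\theta \in \T$, set $F_\theta(t) := \sum_{n=0}^{N-1} a_n e^{-it\lambda_n^2} e^{\pm i n\theta}$. This is a trigonometric polynomial in $t$ whose spectrum lies in $\{\lambda_n^2 : 0 \le n \le N-1\}$, and from the explicit form in Table \ref{table:2} each $\lambda_n^2$ is bounded by $c N^2$ for some constant $c$ depending only on $\X$. So $F_\theta$ has spectral degree at most $O(N^2)$.

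The key step is to exchange an $L^\infty_t$ norm for an $L^6_t$ norm at a quantifiable cost. I will use Nikolskii's inequality on $\T$: for a trigonometric polynomial $P$ of degree $\le M$,
\begin{equation*}
\|P\|_{L^\infty(\T)} \lesssim M^{1/6} \|P\|_{L^6(\T)}\:.
\end{equation*}
Applied to $F_\theta$ with $M \lesssim N^2$, this gives $\sup_{t \in \T}|F_\theta(t)| \lesssim N^{1/3} \|F_\theta\|_{L^6_t}$. (Equivalently, one may combine the Sobolev embedding $W^{s,6}(\T) \hookrightarrow L^\infty(\T)$ for $s > 1/6$ with Bernstein's inequality $\|(-\partial_t^2)^{s/2}F_\theta\|_{L^6_t} \lesssim N^{2s}\|F_\theta\|_{L^6_t}$ and let $s \downarrow 1/6$; both routes produce the same $N^{1/3}$-loss.)

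Raising to the sixth power, integrating in $\theta$, and interchanging the orders of integration yields
\begin{equation*}
\left\|\sup_{t\in\T}|F_\theta|\right\|_{L^6(\T)} \lesssim N^{1/3}\,\|F\|_{L^6(\T\times\T)}\:,
\end{equation*}
where $F(t,\theta) = F_\theta(t)$. Inserting the Strichartz bound from Lemma \ref{Strichartz}, namely $\|F\|_{L^6(\T\times\T)} \lesssim_\varepsilon N^\varepsilon \|\sa\|_{\ell^2}$, gives the claimed estimate with loss $N^{1/3+\varepsilon}$.

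There is no real obstacle here: the Strichartz estimate of Lemma \ref{Strichartz} does all the arithmetic work, and the passage to the maximal function reduces to a classical one-dimensional Nikolskii (or Sobolev+Bernstein) inequality applied in $t$, whose loss $N^{2/6} = N^{1/3}$ is precisely the exponent appearing in the statement. The $\varepsilon$-loss is inherited entirely from Lemma \ref{Strichartz}.
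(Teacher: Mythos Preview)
Your argument is correct. Both approaches feed the Strichartz bound of Lemma \ref{Strichartz} into a one-dimensional embedding in the $t$-variable, but the mechanism differs. The paper follows the Moyua--Vega route: apply the Fundamental Theorem of Calculus to $|u(\theta,t)|^6$, take the supremum in $t$, integrate in $\theta$, and then use H\"older to bound $\int |u|^5|u_t|$ by $\|u\|_{L^6}^5\|u_t\|_{L^6}$, where $\|u_t\|_{L^6}$ is controlled by a second application of Lemma \ref{Strichartz} to $Df$ at a cost of $N^{2+\varepsilon}$; the final sixth root recovers $N^{1/3+\varepsilon}$. Your approach replaces this FTC-plus-H\"older step by a single application of Nikolskii's inequality $\|P\|_{L^\infty(\T)} \lesssim M^{1/6}\|P\|_{L^6(\T)}$ for trigonometric polynomials of degree $\le M$, exploiting that $\lambda_n^2 \in \Z$ (clear from Table \ref{table:2}) with $\lambda_n^2 \lesssim N^2$. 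This is slightly more direct: it uses Lemma \ref{Strichartz} only once and avoids the intermediate estimate on $u_t$. The paper's method, on the other hand, is the more classical template in the Carleson-problem literature and does not need to verify that the frequencies are integers (only that the derivative loses $N^2$), so it would transfer more readily to non-integer spectra. In the present setting both routes give exactly the same exponent.
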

\begin{proof}
While working in either of the cases as classified in Table \ref{table:2}, we first consider the differential operator $D$ on $\T$ such that
\begin{equation*}
D(e^{in\theta}) = -\lambda^2_{|n|}\:e^{in\theta}\:, \: n \in \Z,\: \theta \in \T\:. 
\end{equation*}
Then we look at the solution of the corresponding Schr\"odinger equation:
\begin{equation} \label{max_estimate_circle_eq1}
\begin{cases}
	 i\frac{\partial u}{\partial t} +Du=0\:,\:\:\:  &(\theta,t) \in \T \times \T\:, \\
	u(0,\cdot)=f\:, &\text{ on } \T \:,
	\end{cases}
\end{equation}
with initial data
\begin{equation*}
f(\theta):=\sum_{n=0}^{N-1}a_n e^{\pm in\theta}\:,\:\:\theta \in \T\:.
\end{equation*}
By an application of the Fundamental theorem of Calculus in time $t$, we have for any $(\theta,t) \in \T \times \T$,
\begin{equation*}
|u(\theta,t)|^6 = |f(\theta)|^6 \:+\: 3\int_0^t |u(\theta,s)|^4\: 2\:\text{Re}\left(u(\theta,s)\overline{u_s(\theta,s)}\right)\:ds\:.
\end{equation*}
Thus taking the supremum in $t$ and then integrating in $\theta$, it follows that
\begin{equation} \label{max_estimate_circle_eq2}
\left\|\sup_{t \in \T}\left|\sum_{n=0}^{N-1} a_n e^{-it\lambda^2_n}e^{\pm in\theta}\right| \right\|^6_{L^6(\T )} \le \|f\|^6_{L^6(\T )} \:+\: 6\int_\T\int_\T |u(\theta,s)|^5\:|u_s(\theta,s)|\:ds\:d\theta.
\end{equation}
Now by the Sobolev embedding on $\T$, we have
\begin{equation} \label{max_estimate_circle_eq3}
\|f\|_{L^6(\T )} \lesssim N^{\frac{1}{3}}\:\|f\|_{L^2(\T )}=N^{\frac{1}{3}}\:\|\sa\|_{\ell^2}\:.
\end{equation}
For the other term, we apply H\"older's inequality to get
\begin{equation} \label{max_estimate_circle_eq4}
\int_\T\int_\T |u(\theta,s)|^5\:|u_s(\theta,s)|\:ds\:d\theta \lesssim \|u\|^5_{L^6(\T \times \T)}\:\|u_s\|_{L^6(\T \times \T)}\:.
\end{equation}
Then by Lemma \ref{Strichartz}, we get that for any $\varepsilon>0$,
\begin{equation} \label{max_estimate_circle_eq5}
\|u\|_{L^6(\T \times \T)} = \left\|\sum_{n=0}^{N-1} a_n e^{-it\lambda^2_n}e^{\pm in\theta} \right\|_{L^6(\T \times \T)} \lesssim_\varepsilon N^\varepsilon \|\sa\|_{\ell^2}\:.
\end{equation}
To estimate $\|u_t\|_{L^6(\T \times \T)}$, we note that
\begin{equation*}
u_t= ie^{itD} Df\:,
\end{equation*}
where $e^{itD} Df$ is the solution of the Schr\"odinger equation (\ref{max_estimate_circle_eq1}) with initial data $Df$. Then applying Lemma \ref{Strichartz} to $Df$ and recalling Remark \ref{rank1_eigenvalue_remark}, we obtain for any $\varepsilon>0$,
\begin{equation} \label{max_estimate_circle_eq6}
\|u_t\|_{L^6(\T \times \T)} \lesssim_\varepsilon N^{2+\varepsilon}\|\sa\|_{\ell^2} \:.
\end{equation}
Thus combining (\ref{max_estimate_circle_eq2})-(\ref{max_estimate_circle_eq6}), the result follows.
\end{proof}

\begin{remark} \label{Strichartz_remark}
In retrospect, Lemma \ref{Strichartz} can be viewed as a Strichartz estimate for even initial data on $\T$ corresponding to the Schr\"odinger operator (\ref{max_estimate_circle_eq1}). 
\end{remark}

We will also require the following Sobolev embedding for $K$-biinvariant functions:
\begin{lemma} \label{sobolev_embedding}
Let $\X$ be a rank one Riemannian symmetric space of compact type and dimension $d \ge 1$. Then for all $K$-biinvariant functions $f$ and all $\varepsilon>0$, 
\begin{itemize}
\item[(i)] near the origin, we have
\begin{equation*}
\|f\|_{L^\infty\left(\left[0,\frac{\pi}{2}\right]\right)} \lesssim_\varepsilon \|f\|_{H^{\frac{d}{2}+\varepsilon}(\X)}\:;
\end{equation*}
\item[(ii)] near the antipodal manifold, we have
\begin{equation*}
\|f\|_{L^\infty\left(\left[\frac{\pi}{2},\pi\right]\right)} \lesssim_\varepsilon \|f\|_{H^{\alpha_0+\varepsilon}(\X)}\:,
\end{equation*}
where 
\begin{equation*}
\alpha_0 =
\begin{cases}
\frac{d}{2}\:\:\:&\text{if } \X=\Sb^d\:,\\
\frac{1}{2}\:\:\:&\text{if } \X=P^d(\R)\:,\\
1\:\:\:&\text{if } \X=P^d(\C)\:,\\
2\:\:\:&\text{if } \X=P^d(\Hb)\:,\\
4\:\:\:&\text{if } \X=P^{16}(Cay)\:.
\end{cases}
\end{equation*}
\end{itemize}
\end{lemma}

\begin{proof}
To prove part (i), by density, it suffices to prove the inequality for $K$-biinvariant functions $f \in C^\infty(\X)$ with $Supp(f) \subset [0,\frac{\pi}{2}]$. For such a function $f$, its spherical Fourier series (as in (\ref{spherical_fourier_series})) is of the form
\begin{equation*}
f(\theta)=\sum_{n=0}^\infty a_n \:\tilde{Z}_n(\theta)\:,\:\:\theta \in [0,\pi]\:,
\end{equation*}
with $a_n \in \C$. Performing the dyadic decomposition,
\begin{equation*}
f=\sum_{m=0}^\infty \sum_{n=2^m-1}^{2^{m+1}-2} a_n\:\tilde{Z}_n\:,
\end{equation*} 
we note that for any $\alpha \ge 0$
\begin{equation*}
\|f\|^2_{H^{\alpha}(\X)} \asymp \sum_{m=0}^\infty 2^{2\alpha m} \sum_{n=2^m-1}^{2^{m+1}-2} |a_n|^2\:.
\end{equation*}
Thus to prove part (i), it suffices to show that for any $\varepsilon>0$,
\begin{equation} \label{sobolev_embedding_pf_eq1}
\|g_N\|_{L^\infty\left(\left[0,\frac{\pi}{2}\right]\right)} \lesssim N^{\frac{d}{2}+\varepsilon}\|g_N\|_{L^2(\X)}\:,
\end{equation}
where 
\begin{equation*}
g_N:= \sum_{n=N-1}^{2(N-1)} a_n\:\tilde{Z}_n\:,\:N=2^m\:,\:m \in \N \cup \{0\}\:,
\end{equation*}
as then by (\ref{sobolev_embedding_pf_eq1}) and the Cauchy-Schwarz inequality,
\begin{eqnarray*}
\|f\|_{L^\infty\left(\left[0,\frac{\pi}{2}\right]\right)} \le \sum_{m=0}^\infty \|g_{2^m}\|_{L^\infty\left(\left[0,\frac{\pi}{2}\right]\right)} &\lesssim & \sum_{m=0}^\infty {(2^m)}^{\frac{d}{2}+\varepsilon}  \|g_{2^m}\|_{L^2(\X)} \\
&=& \sum_{m=0}^\infty 2^{-m\varepsilon}\:{(2^m)}^{\frac{d}{2}+2\varepsilon}  \|g_{2^m}\|_{L^2(\X)} \\
& \le & \left(\sum_{m=0}^\infty 2^{-2m\varepsilon}\right)^{\frac{1}{2}} \left(\sum_{m=0}^\infty 2^{2\left(\frac{d}{2}+2\varepsilon\right)m}\|g_{2^m}\|^2_{L^2(\X)}\right)^{\frac{1}{2}} \\
&\lesssim_{\varepsilon} & \|f\|_{H^{\frac{d}{2}+2\varepsilon}(\X)}\:.
\end{eqnarray*}
The result would then follow as $\varepsilon>0$ is arbitrary.

Now (\ref{sobolev_embedding_pf_eq1}) follows from the $L^\infty$ bound in part (i) of Lemma \ref{Linfty_bound} and the Cauchy-Schwarz inequality, as for all $\theta \in [0,\pi/2]$,
\begin{eqnarray*}
\left|g_N(\theta)\right| = \left|\sum_{n=N-1}^{2(N-1)} a_n\:\tilde{Z}_n(\theta)\right| &\le & \sum_{n=N-1}^{2(N-1)} \left|a_n\right|\:\left\|\tilde{Z}_n\right\|_{L^\infty\left(\left[0,\frac{\pi}{2}\right]\right)} \\
& \lesssim & \sum_{n=N-1}^{2(N-1)} \left|a_n\right|\: {\left(1+n\right)}^{\frac{d-1}{2}} \\
& \le & \left(\sum_{n=N-1}^{2(N-1)}{\left(1+n\right)}^{d-1}\right)^{\frac{1}{2}}\|g_N\|_{L^2(\X)} \\
& \lesssim & N^{\frac{d}{2}}\:\|g_N\|_{L^2(\X)}\:.
\end{eqnarray*}
This completes the proof of  part (i) Lemma \ref{sobolev_embedding}.

Part (ii) also follows similarly by applying part (ii) of Lemma \ref{Linfty_bound}.
\end{proof}

We now present the proof of Theorem \ref{result2}.
\begin{proof}[Proof of Theorem \ref{result2}]
The proof is divided into two steps.

{\bf Step 1:} We first focus on the case of the Schr\"odinger equation. It suffices to prove the maximal estimate
\begin{equation} \label{result2_pf_eq1}
\left\|\sup_{0 \le t < 2\pi} \left|e^{it\Delta} f \right|\right\|_{L^1(\X)} \lesssim \|f\|_{H^\alpha(\X)}\:,
\end{equation}
for all $\alpha >1/3$ and all $K$-biinvariant $f \in C^\infty(\X)$.

The maximal estimate (\ref{result2_pf_eq1}) for $\X=\Sb^d$ is proved in \cite[Theorem 1.3]{CDLY}. We now recall that the real projective space $P^d(\R)$ can be obtained from $\Sb^d$ by identifying the antipodal points:
\begin{eqnarray*}
\pi : &&\Sb^d\:\: \to \:\: P^d(\R) \\
&& \pm x \:\:\mapsto \:\: [x]\:,
\end{eqnarray*}
with the projection map $\pi$ being a local isometry.  Thus the functions on $P^d(\R)$ can be identified as even functions on $\Sb^d$ and if $f_e$ is the even function on $\Sb^d$ corresponding to the function $f$ on $P^d(\R)$, then $\Delta_{P^d(\R)} f=\Delta_{\Sb^d}f_e$. So the corresponding Schr\"odinger propagations can also be similarly identified. Thus the maximal estimate (\ref{result2_pf_eq1}) for $\X=P^d(\R)$ follows from the corresponding result on $\Sb^d$. So we will give the proof for the complementary cases: $\X=P^d(\C),\:P^d(\Hb)$ and $P^{16}(Cay)$.

For a $K$-biinvariant $f \in C^\infty(\X)$, its spherical Fourier series is of the form
\begin{equation*}
f(\theta)=\sum_{n=0}^\infty a_n \:\tilde{Z}_n(\theta)\:,\:\:\theta \in [0,\pi]\:,
\end{equation*}
with $a_n \in \C$. Then its Schr\"odinger propagation is also $K$-biinvariant and is given by,
\begin{equation*}
e^{it\Delta} f(\theta)=\sum_{n=0}^\infty e^{-it\lambda^2_n}\; a_n \:\tilde{Z}_n(\theta)\:,\:\:\theta \in [0,\pi]\:,
\end{equation*}
where $\lambda^2_n$ are as classified in Table \ref{table:2}. For notational convenience, we write the maximal function as,
\begin{equation*}
S^*f(\theta):= \sup_{0 \le t < 2\pi} \left|e^{it\Delta} f (\theta)\right|\:,\:\:\theta \in [0,\pi]\:.
\end{equation*}
Then in view of (\ref{density}), we get that
\begin{equation*}
\left\|\sup_{0 \le t < 2\pi} \left|e^{it\Delta} f \right|\right\|_{L^1(\X)} \asymp \int_0^\pi S^*f(\theta)\:{\left(\sin \frac{\theta}{2}\right)}^{M_1} {(\sin \theta)}^{M_2}\:d\theta\:,
\end{equation*}
where $(M_1,M_2)$ are as in Table \ref{table:1}. Now dyadically decomposing $f$ as in the proof of Lemma \ref{sobolev_embedding}, we note that to obtain (\ref{result2_pf_eq1}), it suffices to prove that for any $\varepsilon>0$,
\begin{equation} \label{result2_pf_eq2}
\int_0^\pi S^*g_N(\theta)\:{\left(\sin \frac{\theta}{2}\right)}^{M_1} {(\sin \theta)}^{M_2}\:d\theta \lesssim_\varepsilon N^{\frac{1}{3}+\varepsilon}\:\|\sa_N\|_{\ell^2}\:,
\end{equation}
where 
\begin{equation*}
g_N:= \sum_{n=N-1}^{2(N-1)} a_n\:\tilde{Z}_n\:\:\text{   and    } \sa_N := \{a_n\}_{n=N-1}^{2(N-1)}\:\:,\:N=2^m\:,\:m \in \N \cup \{0\}\:.
\end{equation*} 
Now proceeding as in the proof of the Sobolev embedding (part (i) of Lemma \ref{sobolev_embedding}), we obtain
\begin{equation} \label{result2_pf_eq3}
\|S^*g_N\|_{L^\infty\left(\left[0,\frac{\pi}{2}\right]\right)} \lesssim N^{\frac{d}{2}}\|\sa_N\|_{\ell^2}\:.
\end{equation}
We fix a small $c>0$ and use the above estimate for  $\theta$ near $0$. Indeed, using (\ref{result2_pf_eq3}), the facts that  $\sin \theta \le \theta$ near $0$, $M_1+M_2+1=d$ and then performing an elementary integration, we obtain
\begin{equation}\label{result2_pf_eq4}
\int_0^{cN^{-1}} S^*g_N(\theta)\:{\left(\sin \frac{\theta}{2}\right)}^{M_1} {(\sin \theta)}^{M_2}\:d\theta \lesssim N^{-\frac{d}{2}} \|\sa_N\|_{\ell^2}\:.
\end{equation}
Similarly, using part (ii) of Lemma \ref{sobolev_embedding} and the fact that $\sin\frac{\theta}{2} \le 1$, we obtain near $\theta=\pi$,
\begin{equation*}
\int_{\pi-cN^{-1}}^\pi S^*g_N(\theta)\:{\left(\sin \frac{\theta}{2}\right)}^{M_1} {(\sin \theta)}^{M_2}\:d\theta \lesssim N^{\alpha_0} \|\sa_N\|_{\ell^2}\int_{\pi-cN^{-1}}^\pi {(\sin \theta)}^{M_2}\:d\theta  \:.
\end{equation*}
Then performing the change of variable $\xi:=\pi - \theta$, we see that
\begin{equation*}
\int_{\pi-cN^{-1}}^\pi {(\sin \theta)}^{M_2}\:d\theta = \int_0^{cN^{-1}} {(\sin \xi)}^{M_2}\:d\xi\:. 
\end{equation*}
Now using the fact that $\sin \xi \le \xi$ and then performing an elementary integration, we obtain
\begin{equation*}
\int_0^{cN^{-1}} {(\sin \xi)}^{M_2}\:d\xi \lesssim N^{-(M_2+1)}\:,
\end{equation*}
and hence we get for $\theta$ near $\pi$,
\begin{equation}\label{result2_pf_eq5}
\int_{\pi-cN^{-1}}^\pi S^*g_N(\theta)\:{\left(\sin \frac{\theta}{2}\right)}^{M_1} {(\sin \theta)}^{M_2}\:d\theta \lesssim N^{\alpha_0-(M_2+1)}\:\|\sa_N\|_{\ell^2}\:.
\end{equation}
At this juncture, we note that for the cases under consideration, we have 
\begin{equation} \label{result2_pf_eq6}
\alpha_0-(M_2+1) = 
\begin{cases}
-1\:\:\:&\text{if } \X=P^d(\C)\:,\\
-2\:\:\:&\text{if } \X=P^d(\Hb)\:,\\
-4\:\:\:&\text{if } \X=P^{16}(Cay)\:.
\end{cases}
\end{equation}
Next, in the region $cN^{-1} \le \theta \le \pi - cN^{-1}$, invoking the expansion of Jacobi polynomials given by Lemma \ref{jacobi_polynomial} and  the estimate (\ref{jacobi_poly_L2}), we have the uniform estimate
\begin{eqnarray} \label{result2_pf_eq7}
\tilde{Z}_n(\theta)&=&\frac{c_n}{\left(\sin\frac{\theta}{2}\right)^{\sigma+\frac{1}{2}}\left(\cos\frac{\theta}{2}\right)^{\tau+\frac{1}{2}}}\cos\left(\left(n+\frac{\sigma+\tau+1}{2}\right)\theta-\frac{\left(\sigma +\frac{1}{2}\right)\pi}{2}\right)\nonumber\\
&&+\frac{\mathcal{O}(1)}{n\sin\theta \left(\sin\frac{\theta}{2}\right)^{\sigma+\frac{1}{2}}\left(\cos\frac{\theta}{2}\right)^{\tau+\frac{1}{2}}}\:,
\end{eqnarray}
where $c_n$ are positive constants, bounded above and below and the parameters $(\sigma,\tau)$ are given in Table \ref{table:2}. Invoking the decomposition (\ref{result2_pf_eq7}) and expanding cosine in terms of exponentials, in the region $cN^{-1} \le \theta \le \pi - cN^{-1}$, we  get
\begin{equation*}
S^*g_N(\theta) \lesssim S^*_1g_N(\theta) \:+\: S^*_2g_N(\theta)\:,
\end{equation*} 
where 
\begin{eqnarray*}
&&S^*_1g_N(\theta):= \frac{1}{\left(\sin\frac{\theta}{2}\right)^{\sigma+\frac{1}{2}}\left(\cos\frac{\theta}{2}\right)^{\tau+\frac{1}{2}}}\sum_{\pm}\sup_{0\le t<2\pi} \left|\sum_{n=N-1}^{2(N-1)} a_nc_n\:e^{-it\lambda^2_n}\:e^{\pm in\theta} \right| \:,\\
&&S^*_2g_N(\theta):= \frac{1}{\sin\theta \left(\sin\frac{\theta}{2}\right)^{\sigma+\frac{1}{2}}\left(\cos\frac{\theta}{2}\right)^{\tau+\frac{1}{2}}}\sup_{0\le t<2\pi}\left|\sum_{n=N-1}^{2(N-1)}\frac{a_n\:e^{-it\lambda^2_n} \mathcal{O}(1)}{n}\right|\:.
\end{eqnarray*}
We first estimate $S^*_2g_N$. By an application of the Cauchy-Schwarz inequality in frequency $n$, we get that for any $\theta \in \left[cN^{-1}, \pi - cN^{-1}\right]$,
\begin{equation*}
S^*_2g_N(\theta) \lesssim \frac{\|\sa_N\|_{\ell^2}}{\sqrt{N}\sin\theta \left(\sin\frac{\theta}{2}\right)^{\sigma+\frac{1}{2}}\left(\cos\frac{\theta}{2}\right)^{\tau+\frac{1}{2}}}\:,
\end{equation*}
which yields
\begin{eqnarray*}
&&\int_{cN^{-1}}^{\pi-cN^{-1}} S^*_2g_N(\theta)\:{\left(\sin \frac{\theta}{2}\right)}^{M_1} {(\sin \theta)}^{M_2}\:d\theta \\
&\lesssim & \frac{\|\sa_N\|_{\ell^2}}{\sqrt{N}}\int_{cN^{-1}}^{\pi-cN^{-1}} \frac{{\left(\sin \frac{\theta}{2}\right)}^{M_1} {(\sin \theta)}^{M_2}}{\sin\theta \left(\sin\frac{\theta}{2}\right)^{\sigma+\frac{1}{2}}\left(\cos\frac{\theta}{2}\right)^{\tau+\frac{1}{2}}}\:d\theta\:.
\end{eqnarray*}
Now by elementary estimates, keeping in mind the values of the parameters $M_1,M_2,\sigma,\tau$, we get that
\begin{equation} \label{result2_pf_eq8}
\int_{cN^{-1}}^{\pi-cN^{-1}} S^*_2g_N(\theta)\:{\left(\sin \frac{\theta}{2}\right)}^{M_1} {(\sin \theta)}^{M_2}\:d\theta \lesssim N^{\alpha_1}\:\|\sa_N\|_{\ell^2}\:,
\end{equation}
where 
\begin{equation} \label{result2_pf_eq9}
\alpha_1 = \begin{cases}
0\:\:\:&\text{ if } \X=P^d(\C)\:,\\
-\frac{1}{2}\:\:\:&\text{ if } \X=P^d(\Hb),\:P^{16}(Cay)\:.
\end{cases}
\end{equation}
Finally, by using the values of the parameters $M_1,M_2,\sigma,\tau$, Lemma \ref{max_estimate_circle} and the fact that $c_n$ are positive constants, bounded above and below, we get that for any $\varepsilon>0$,
\begin{eqnarray} \label{result2_pf_eq11}
&&\int_{cN^{-1}}^{\pi-cN^{-1}} S^*_1g_N(\theta)\:{\left(\sin \frac{\theta}{2}\right)}^{M_1} {(\sin \theta)}^{M_2}\:d\theta \nonumber\\
& = & \int_{cN^{-1}}^{\pi-cN^{-1}} \frac{{\left(\sin \frac{\theta}{2}\right)}^{M_1} {(\sin \theta)}^{M_2}}{\left(\sin\frac{\theta}{2}\right)^{\sigma+\frac{1}{2}}\left(\cos\frac{\theta}{2}\right)^{\tau+\frac{1}{2}}}\left(\sum_{\pm}\sup_{0\le t<2\pi} \left|\sum_{n=N-1}^{2(N-1)} a_nc_n\:e^{-it\lambda^2_n}\:e^{\pm in\theta} \right|\right)\;d\theta \nonumber\\
&\lesssim & \left\|\sum_{\pm}\sup_{0\le t<2\pi} \left|\sum_{n=N-1}^{2(N-1)} a_nc_n\:e^{-it\lambda^2_n}\:e^{\pm in\theta} \right|\right\|_{L^1(\T)} \nonumber\\
&\lesssim_\varepsilon & N^{\frac{1}{3}+\varepsilon}\:\|\sa_N\|_{\ell^2}\:.
\end{eqnarray}
Thus combining (\ref{result2_pf_eq4})-(\ref{result2_pf_eq6}) and (\ref{result2_pf_eq8})-(\ref{result2_pf_eq11}), we get (\ref{result2_pf_eq2}), which completes the proof of Theorem \ref{result2} for the Schr\"odinger equation.

\medskip

{\bf Step 2:} By the transference principle (Theorem \ref{transference_principle}), the results in Theorem \ref{result2} for the Boussinesq equation and the Beam equation now follow from the result of the Schr\"odinger equation proved in Step $1$, in view of Remark \ref{examples_remark}. This completes the proof of Theorem \ref{result2}.
\end{proof}

\section{Failure of $\alpha <1/4$}
In this section, we prove Theorem \ref{result3}. But we first see the following counting result, whose proof uses asymptotics of arithmetic functions:
\begin{lemma} \label{counting_lemma}
Let $N \in \N$ be large and $\varepsilon>0$ be small. Then there exists $C>0$ independent of $N$ such that the set
\begin{equation} \label{set}
E_N := \bigcup_{\substack{q \text{ odd } :\: \sqrt{N} \le q \le 2\sqrt{N} \\ p \text{ even } :\:2\varepsilon < \frac{2\pi p}{q}<\pi - 2\varepsilon}} \left(\frac{2\pi p}{q}+\frac{\pi}{16N}\:,\:\frac{2\pi p}{q}+\frac{\pi}{8N}\right)\:,
\end{equation}
has Lebesgue measure $|E_N|\ge C$\:.
\end{lemma}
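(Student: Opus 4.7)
The strategy is to show that the union defining $E_N$ is essentially a disjoint union of intervals of length $\pi/(16N)$, and then count the distinct fractions $2\pi p/q$ arising in the index set to be at least of order $N$. Multiplying these two estimates will give $|E_N|\gtrsim 1$.

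First, I would observe that each interval in the union has length $\pi/(8N)-\pi/(16N) = \pi/(16N)$. For two admissible pairs $(p_1,q_1)\neq (p_2,q_2)$ giving distinct fractions $2\pi p_1/q_1\neq 2\pi p_2/q_2$, the separation satisfies
\begin{equation*}
\left|\frac{2\pi p_1}{q_1}-\frac{2\pi p_2}{q_2}\right| = \frac{2\pi|p_1q_2-p_2q_1|}{q_1q_2}\ge \frac{2\pi}{q_1q_2}\ge \frac{2\pi}{4N}=\frac{\pi}{2N},
\end{equation*}
which is strictly greater than the interval length $\pi/(16N)$. Hence intervals coming from distinct fractions are disjoint, and
\begin{equation*}
|E_N| \;\ge\; \frac{\pi}{16N}\cdot \#\bigl\{\text{distinct fractions }2\pi p/q \text{ arising in the union}\bigr\}.
\end{equation*}

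Next, to lower bound the number of distinct fractions, I would restrict attention to pairs $(p,q)$ with $\gcd(p,q)=1$, since coprime pairs automatically produce distinct fractions. For each fixed odd $q\in[\sqrt{N},2\sqrt{N}]$, writing $p=2m$ (which is legitimate since $q$ is odd, so $\gcd(2m,q)=\gcd(m,q)$), I would count
\begin{equation*}
A_q := \#\left\{m\in I_q:\gcd(m,q)=1\right\},\qquad I_q := \left(\tfrac{q\varepsilon}{2\pi},\tfrac{q(\pi-2\varepsilon)}{4\pi}\right),
\end{equation*}
which by the M\"obius identity $\sum_{d|q}\mu(d)/d = \varphi(q)/q$ from (\ref{mobius_inversion}), combined with $\#\{m\in I_q : d|m\}=|I_q|/d+\mathcal{O}(1)$, yields
\begin{equation*}
A_q \;=\; \sum_{d|q}\mu(d)\bigl(|I_q|/d+\mathcal{O}(1)\bigr) \;=\; \frac{\pi-4\varepsilon}{4\pi}\,\varphi(q) \;+\; \mathcal{O}(d(q)).
\end{equation*}

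The key step is then the lower bound $\sum_{q\text{ odd},\,\sqrt{N}\le q\le 2\sqrt{N}}\varphi(q)\gtrsim N$. Using (\ref{mobius_inversion}) again, I would interchange summations: writing $q=dm$ with both $d$ and $m$ odd,
\begin{equation*}
\sum_{\substack{q\text{ odd}\\ \sqrt{N}\le q\le 2\sqrt{N}}}\varphi(q) \;=\; \sum_{d\text{ odd}}\mu(d)\sum_{\substack{m\text{ odd}\\ \sqrt{N}/d\le m\le 2\sqrt{N}/d}} m.
\end{equation*}
Evaluating the inner arithmetic-progression sum as $(3N)/(4d^2)+\mathcal{O}(\sqrt{N}/d)$, the outer sum equals $C_0 N + \mathcal{O}(\sqrt{N}\log N)$ with $C_0 = \tfrac{3}{4}\sum_{d\text{ odd}}\mu(d)/d^2>0$ (a convergent positive constant, bounded below by an absolute number). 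Summing $A_q$ over the admissible $q$ and controlling the error by $\sum_q d(q)\lesssim_\delta N^{(1+\delta)/2}$ via (\ref{asymptotics}), I obtain
\begin{equation*}
\sum_{\substack{q\text{ odd}\\ \sqrt{N}\le q\le 2\sqrt{N}}} A_q \;\gtrsim\; N
\end{equation*}
for $N$ large. Combining with the disjointness from the first step gives $|E_N|\gtrsim 1$, and adjusting the constant on finitely many small $N$ yields the lemma.

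The main obstacle I anticipate is the lower bound on $\sum \varphi(q)$: the pointwise bound $\varphi(q)\gtrsim_\delta q^{1-\delta}$ from (\ref{asymptotics}) is too weak (it only gives order $N^{1-\delta/2}$), so the M\"obius-inversion computation together with a careful extraction of the positive main term $C_0 N$ and verification that the error $\mathcal{O}(\sqrt{N}\log N)$ is genuinely of lower order is essential.
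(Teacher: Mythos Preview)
Your proposal is correct and follows essentially the same approach as the paper: disjointness of the intervals via the integrality argument $|p_1q_2-p_2q_1|\ge 1$, restriction to coprime pairs $p=2m$ with $\gcd(m,q)=1$, a M\"obius count giving $A_q=c\,\varphi(q)+\mathcal{O}(d(q))$, and then the evaluation of $\sum_{q\text{ odd},\,\sqrt{N}\le q\le 2\sqrt{N}}\varphi(q)\sim C_0 N$ by M\"obius inversion. The only cosmetic difference is that the paper simplifies the admissible interval for $m$ to $[1,q/5]$ before counting, whereas you keep the exact interval $I_q$; both yield the same conclusion.
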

\begin{proof}
We first note that the intervals considered in the above union are either disjoint or identical. Indeed, let $(p_1,q_1)$ and $(p_2,q_2)$ be two pairs as in the definition (\ref{set}) such that $$\frac{p_1}{q_1} \le \frac{p_2}{q_2}\:.$$ Now if the corresponding intervals have non-empty intersection, then we must have
\begin{equation*}
\frac{2\pi p_2}{q_2}+\frac{\pi}{16N} \le \frac{2\pi p_1}{q_1}+\frac{\pi}{8N}\:,
\end{equation*}
which yields
\begin{equation*}
p_2q_1-p_1q_2 \le \frac{q_1q_2}{32N} \le \frac{1}{8}\:.
\end{equation*}
Now since $p_2q_1-p_1q_2$ is a non-negative integer, we must have $p_2q_1-p_1q_2=0$, that is,
\begin{equation*}
\frac{p_1}{q_1} = \frac{p_2}{q_2}\:,
\end{equation*}
and hence the corresponding intervals are identical. Thus we get that
\begin{equation} \label{counting_lemma_eq1}
|E_N| \ge (\text{the number of intervals appearing in the definition of } E_N) \times \frac{\pi}{16N}\:.
\end{equation}
We now consider the pairs $(p,q)$ such that $p=2p'$ with $\gcd(p',q)=1$. Then the condition on $p$ in the definition of $E_N$ becomes
\begin{equation*}
2\varepsilon < \frac{4\pi p'}{q}<\pi - 2\varepsilon\:,
\end{equation*}
that is,
\begin{equation*}
\frac{\varepsilon q}{2\pi} < p' < \frac{q}{4}-\frac{\varepsilon q}{2\pi}\:.
\end{equation*}
Now as
\begin{equation*}
\left|\left(\frac{\varepsilon q}{2\pi}, \frac{q}{4}-\frac{\varepsilon q}{2\pi}\right)\right|=
 \frac{q}{4} - \frac{\varepsilon q}{\pi} > \frac{q}{5}\:,\:\:\text{  for }\varepsilon < \frac{\pi}{20}\:\:,
\end{equation*}
for each odd $q$ with $\sqrt{N} \le q \le 2\sqrt{N}$, we look at the cardinality of the set $\mathcal{I}_q$ defined as,
\begin{equation*}
\mathcal{I}_q := \left\{j \in \N \mid j \le \frac{q}{5}\text{ and } \gcd(j,q)=1\right\}\:.
\end{equation*}
To proceed, we use the fundamental identity of the M\"obius function given by (\ref{mobius_identity}) to write,
\begin{eqnarray*}
\#\mathcal{I}_q = \sum_{j \le \frac{q}{5}}\:\: \sum_{m| \gcd(j,q)} \mu(m) = \sum_{m|q} \mu(m) \:\:\sum_{\substack{j \le \frac{q}{5} \\ m|j}} 1 &=& \sum_{m|q} \mu(m) \:\: \left\lfloor \frac{q}{5m}\right\rfloor \\
&=& \sum_{m|q} \mu(m) \:\: \left( \frac{q}{5m} \:+\: \mathcal{O}(1)\right) \\
&=& \frac{q}{5}\sum_{m|q} \frac{\mu(m)}{m}  \:+\: \mathcal{O}\left(d(q)\right)\:,
\end{eqnarray*} 
where $d(q)$ is the divisor function. Now by M\"obius inversion (\ref{mobius_inversion}), we have
\begin{equation*}
\#\mathcal{I}_q= \frac{\varphi(q)}{5} \:+\:\mathcal{O}(d(q))\:,
\end{equation*}
which upon invoking the asymptotics of Euler's totient function $\varphi$ and the divisor function $d$ given by (\ref{asymptotics}), yields
\begin{equation*}
\#\mathcal{I}_q \ge \frac{\varphi(q)}{6}\:,
\end{equation*}
for $q$ sufficiently large. This is achieved by taking $N$ sufficiently large. Now as $p=2p'$ with $\gcd(p',q)=1$ and $q$ is odd, we have that $\gcd(p,q)=1$ and hence each of the fractions $\frac{2\pi p}{q}$ are unique. Thus it follows that,
\begin{eqnarray} \label{counting_lemma_eq2}
\text{the number of intervals appearing in the definition of } E_N &\ge & \sum_{\substack{\sqrt{N} \le q \le 2\sqrt{N} \\ q \text{ odd }}} \#\mathcal{I}_q \nonumber\\
&\ge & \frac{1}{6} \sum_{\substack{\sqrt{N} \le q \le 2\sqrt{N} \\ q \text{ odd }}} \varphi(q)\:.
\end{eqnarray}
This leads us to the following estimate, which by the M\"obius inversion (\ref{mobius_inversion}),
\begin{eqnarray*}
\sum_{\substack{j \le n \\ j \text{ odd }}} \varphi(j) = \sum_{\substack{j \le n \\ j \text{ odd }}}\sum_{m|j}  j\: \frac{\mu(m)}{m} &=& \sum_{\substack{m \le n \\ m \text{ odd }}} \mu(m) \sum_{\substack{\ell \le \frac{n}{m} \\ \ell \text{ odd }}} \ell \\
&=& \sum_{\substack{m \le n \\ m \text{ odd }}} \mu(m) {\left\lfloor \frac{\left(\frac{n}{m}+1\right)}{2} \right\rfloor}^2 \\
&=& \sum_{\substack{m \le n \\ m \text{ odd }}} \mu(m) \left\{ \frac{\left(\frac{n}{m}+1\right)}{2} + \mathcal{O}(1) \right\}^2 \\
&=& \frac{n^2}{4} \sum_{\substack{m \le n \\ m \text{ odd }}} \frac{\mu(m)}{m^2} + \mathcal{O}\left(n \sum_{m \le n}\frac{1}{m}\right) + \mathcal{O}(n) \\
&=& \frac{n^2}{4} \sum_{\substack{m \le n \\ m \text{ odd }}} \frac{\mu(m)}{m^2} + \mathcal{O}(n \log n) \\
&=& (1+o(1)) \frac{\zeta}{4}n^2\:, \text{ as } n \to \infty \:,
\end{eqnarray*}
where $\zeta:=\displaystyle\sum_{ m \text{ odd }} \frac{\mu(m)}{m^2}$, as the series converges absolutely. Hence for $N$ sufficiently large,
\begin{equation} \label{counting_lemma_eq3}
\sum_{\substack{\sqrt{N} \le q \le 2\sqrt{N} \\ q \text{ odd }}} \varphi(q) = \sum_{\substack{q \le 2\sqrt{N} \\ q \text{ odd }}} \varphi(q) - \sum_{\substack{q \le \sqrt{N} \\ q \text{ odd }}} \varphi(q) = (1+o(1)) \frac{3\zeta}{4}N\:.
\end{equation}
Thus combining (\ref{counting_lemma_eq1})-(\ref{counting_lemma_eq3}), we obtain
\begin{equation*}
|E_N| > \frac{\pi\zeta}{256}\:.
\end{equation*}
This completes the proof of Lemma \ref{counting_lemma}.
\end{proof}

We now present the proof of Theorem \ref{result3}.
\begin{proof}[Proof of Theorem \ref{result3}]
The proof is divided into two steps.

{\bf Step 1:} We first focus on the case of the Schr\"odinger equation. The failure of the maximal estimate for $\X=\Sb^d$ is proved in \cite[Theorem 1.3]{CDLY}. So we will give the proof for the complementary cases: $\X=P^d(\R)$ ($d$ odd), $P^d(\C),\:P^d(\Hb)$ and $P^{16}(Cay)$.  

Let $N \in \N$ be large and we consider the $K$-biinvariant function,
\begin{equation*}
f_N:= \sum_{n=0}^{N-1} \frac{\tilde{Z}_n}{c_n}\:,
\end{equation*}
where $c_n$ are as in (\ref{result2_pf_eq7}), with $c_0:=1$. Now as for any $\alpha \ge 0$,
\begin{equation*}
\|f_N\|_{H^\alpha} \lesssim N^{\alpha +\frac{1}{2}}\:,
\end{equation*}
it suffices to prove that
\begin{equation} \label{result3_pf_eq1}
\sup_{0\le t <2\pi}\left| e^{it\Delta} f_N(\theta)\right| \gtrsim N^{3/4}\:\:\:\text{ on } E_N\:,
\end{equation}
where the implicit constant above is independent of the choice of $N$ and $E_N$ is the set defined in Lemma \ref{counting_lemma}, as then by Lemma \ref{counting_lemma},
\begin{equation*}
N^{3/4} \lesssim \left\|\sup_{0\le t <2\pi} \left| e^{it\Delta} f_N\right|\right\|_{L^1(\X)} \lesssim \|f_N\|_{H^\alpha} \lesssim N^{\alpha +\frac{1}{2}}\:,
\end{equation*}
which forces $\alpha \ge 1/4$ as $N \to \infty$.

To prove (\ref{result3_pf_eq1}), we fix a small $\varepsilon>0$ and by invoking the expansion of Jacobi polynomials (\ref{result2_pf_eq7}), we have the uniform estimate
\begin{equation*}
\frac{\tilde{Z}_n(\theta)}{c_n}=\frac{1}{\left(\sin\frac{\theta}{2}\right)^{\sigma+\frac{1}{2}}\left(\cos\frac{\theta}{2}\right)^{\tau+\frac{1}{2}}}\cos\left(\left(n+\frac{\sigma+\tau+1}{2}\right)\theta-\frac{\left(\sigma +\frac{1}{2}\right)\pi}{2}\right)+\frac{\mathcal{O}(1)}{n+1}\:,
\end{equation*}
for $\theta \in [\varepsilon,\pi-\varepsilon]$, where the parameters $\sigma, \tau$ are given in Table \ref{table:2}.

Thus for $\theta \in [\varepsilon,\pi-\varepsilon]$, the Schr\"odinger propagation of $f_N$ is given by,
\begin{eqnarray} \label{result3_pf_eq2}
e^{it\Delta} f_N(\theta)&=&\frac{1}{\left(\sin\frac{\theta}{2}\right)^{\sigma+\frac{1}{2}}\left(\cos\frac{\theta}{2}\right)^{\tau+\frac{1}{2}}} \sum_{n=0}^{N-1} e^{-itn(n+\sigma+\tau+1)}\cos\left(n\theta+\kappa_{\sigma,\tau}(\theta)\right)\nonumber\\
&&+\mathcal{O}(\log N)\:,
\end{eqnarray}
where
\begin{equation*} 
\kappa_{\sigma,\tau}(\theta):= \frac{1}{2}(\sigma + \tau +1)\theta - \frac{1}{2}\left(\sigma + \frac{1}{2}\right)\pi \:.
\end{equation*}
We set,
\begin{equation*}
S(t,\theta):= \sum_{n=0}^{N-1} e^{-itn(n+\sigma+\tau+1)}\cos\left(n\theta+\kappa_{\sigma,\tau}(\theta)\right)\:.
\end{equation*}
Now expanding the cosine in terms of exponentials, we write
\begin{equation} \label{St}
S(t,\theta)= \frac{1}{2} \sum_{\pm} S_{\pm}(t,\theta)\:e^{\pm i \kappa_{\sigma ,\tau}(\theta)}\:,
\end{equation}
where 
\begin{equation*}
S_{\pm}(t,\theta) := \sum_{n=0}^{N-1} e^{-itn(n+\sigma+\tau+1)} \:e^{\pm in\theta}\:.
\end{equation*}
We now gradually obtain the divergence on the set $E_N$. First, we consider $t=\frac{2\pi}{q}$ such that $q$ is an odd integer with $\sqrt{N} \le q \le 2\sqrt{N}$. Then we have,
\begin{equation*}
S_+\left(\frac{2\pi}{q},\theta\right)= \sum_{n=0}^{N-1} e^{-2\pi i \frac{n(n+\sigma+\tau+1)}{q}} \:e^{in\theta}\:.
\end{equation*}
We note that in our cases, $\sigma + \tau +1 \in \N$ and hence the function $n \mapsto e^{-2\pi i \frac{n(n+\sigma+\tau+1)}{q}}$ is $q$-periodic. This enables us to decompose $S_+\left(\frac{2\pi}{q},\theta\right)$ as follows,
\begin{eqnarray} \label{result3_pf_eq3} 
S_+\left(\frac{2\pi}{q},\theta\right)&=& \left(\sum_{n=0}^{q\left\lfloor\frac{N}{q}\right\rfloor - 1} \:+\: \sum_{n=q\left\lfloor\frac{N}{q}\right\rfloor}^{N-1}\right)e^{-2\pi i \frac{n(n+\sigma+\tau+1)}{q}} \:e^{in\theta}\nonumber\\
&=& \left(\sum_{\ell=0}^{\left\lfloor\frac{N}{q}\right\rfloor - 1} e^{i\ell q \theta}\right)\mathscr{S}_+\left(\frac{2\pi}{q},\theta\right)\:+\: \mathcal{O}(q)\:,
\end{eqnarray}
where
\begin{equation*}
\mathscr{S}_+\left(\frac{2\pi}{q},\theta\right) = \sum_{n=0}^{q-1} e^{-2\pi i \frac{n(n+\sigma+\tau+1)}{q}} \:e^{in\theta}\:.
\end{equation*}
Next, if $\theta=\frac{2\pi p}{q}$ for some integer $p$, then we get
\begin{equation*}
\mathscr{S}_+\left(\frac{2\pi}{q},\frac{2\pi p}{q}\right) = \sum_{n=0}^{q-1} e^{-2\pi i \frac{n(n+\sigma+\tau+1)}{q}} \:e^{2\pi i\frac{np}{q}}=\sum_{n=0}^{q-1} e^{-2\pi i \frac{n^2+n(\sigma+\tau+1-p)}{q}} \:.
\end{equation*}
Now completing the square in the numerator of the last expression modulo $q$ and plugging in the evaluation of the complete Gauss sum, we get that
\begin{equation*}
\mathscr{S}_+\left(\frac{2\pi}{q},\frac{2\pi p}{q}\right) = z_q\: \sqrt{q}\: e^{2\pi i\frac{r(\sigma+\tau+1-p)^2}{q}}\:,
\end{equation*}
where $r$ is an integer such that $4r \equiv 1 (\text{mod }q)$ and $z_q$ is a unimodular complex number as $q$ is odd. Now choosing $p$ to be even, we can utilize the property that $4r \equiv 1 (\text{mod }q)$ to get,
\begin{eqnarray} \label{result3_pf_eq4}
\mathscr{S}_+\left(\frac{2\pi}{q},\frac{2\pi p}{q}\right) &=& z_q\: \sqrt{q}\: e^{2\pi i\frac{r(\sigma+\tau+1)^2 + rp^2}{q}}\:e^{-2\pi i \frac{2r(\sigma + \tau +1)p}{q}} \nonumber\\
&=& z_q\: \sqrt{q}\: e^{2\pi i\frac{r(\sigma+\tau+1)^2 + rp^2}{q}}\:e^{- i \frac{\pi(\sigma + \tau +1)p}{q}}\:.
\end{eqnarray}
Next, we consider $\theta=\frac{2\pi p}{q} + \xi$, where $p,q$ are as before and $0< \xi < \pi/(8N)$. Then since the chordal metric on $S^1$ is $1$-Lipschitz, it follows that
\begin{equation*}
\left|\mathscr{S}_+\left(\frac{2\pi}{q},\theta\right)-\mathscr{S}_+\left(\frac{2\pi}{q},\frac{2\pi p}{q}\right)\right| \le \sum_{n=0}^{q-1}|e^{in\xi}-1| \le \xi \left(\sum_{n=0}^{q-1} n \right) \le \frac{q^2}{N}\:,
\end{equation*}
that is,
\begin{equation} \label{result3_pf_eq5}
\mathscr{S}_+\left(\frac{2\pi}{q},\theta\right) = \mathscr{S}_+\left(\frac{2\pi}{q},\frac{2\pi p}{q}\right) + \mathcal{O}\left(\frac{q^2}{N}\right)\:.
\end{equation}
Also,
\begin{equation*}
\left|e^{i\kappa_{\sigma,\tau}(\theta)}-e^{i\kappa_{\sigma,\tau}\left(\frac{2\pi p}{q}\right)}\right|=\left|e^{\frac{i}{2}(\sigma + \tau +1)\xi}-1\right| \lesssim \frac{1}{N}\:,
\end{equation*}
that is,
\begin{equation} \label{result3_pf_eq6}
e^{i\kappa_{\sigma,\tau}(\theta)} = e^{i\kappa_{\sigma,\tau}\left(\frac{2\pi p}{q}\right)} + \mathcal{O}\left(\frac{1}{N}\right)\:.
\end{equation}
Now combining (\ref{result3_pf_eq3})-(\ref{result3_pf_eq6}), we get
\begin{equation*}
S_{+}\left(\frac{2\pi}{q},\theta\right)\:e^{i\kappa_{\sigma,\tau}(\theta)} = \left(\sum_{\ell=0}^{\left\lfloor\frac{N}{q}\right\rfloor - 1} e^{i\ell q \theta}\right)\mathscr{S}_+\left(\frac{2\pi}{q},\frac{2\pi p}{q}\right)e^{i\kappa_{\sigma,\tau}\left(\frac{2\pi p}{q}\right)} + \mathcal{O}(q)\:.
\end{equation*}
Next plugging in the explicit expression (\ref{result3_pf_eq4}) in the above, we further get,
\begin{equation*}
S_{+}\left(\frac{2\pi}{q},\theta\right)\:e^{i\kappa_{\sigma,\tau}(\theta)} =z_q\: \sqrt{q}\: e^{2\pi i\frac{r(\sigma+\tau+1)^2 + rp^2}{q}} \left(\sum_{\ell=0}^{\left\lfloor\frac{N}{q}\right\rfloor - 1} e^{i\ell q \xi}\right)e^{-i\frac{\left(\sigma + \frac{1}{2}\right)}{2}\pi} + \mathcal{O}(q)\:.
\end{equation*}
Similarly, for the same values of $\theta$, we will get
\begin{equation*}
S_{-}\left(\frac{2\pi}{q},\theta\right)\:e^{-i\kappa_{\sigma,\tau}(\theta)} =z_q\: \sqrt{q}\: e^{2\pi i\frac{r(\sigma+\tau+1)^2 + rp^2}{q}} \left(\sum_{\ell=0}^{\left\lfloor\frac{N}{q}\right\rfloor - 1} e^{-i\ell q \xi}\right)e^{i\frac{\left(\sigma + \frac{1}{2}\right)}{2}\pi} + \mathcal{O}(q)\:.
\end{equation*}
From the above two estimates, we get back to $S\left(\frac{2\pi}{q},\theta \right)$ in (\ref{St}),
\begin{equation*}
\left|S\left(\frac{2\pi}{q},\theta \right)\right| = \sqrt{q}\left|\sum_{\ell=0}^{\left\lfloor\frac{N}{q}\right\rfloor - 1} \cos\left(\frac{\left(\sigma + \frac{1}{2}\right)}{2}\pi - \ell q \xi\right)\right| \:+\:\mathcal{O}\left(\sqrt{N}\right)\:.
\end{equation*}
At this juncture, we note that $\frac{\left(\sigma + \frac{1}{2}\right)}{2}=\frac{d-1}{4}$. Thus for $\X=P^d(\C),\:P^d(\Hb)$ and $P^{16}(Cay)$, as $d$ is even and $\ell q \xi < \pi/8$, the cosines in the sum are of the same sign and moreover, we get that there exists $C>0$ such that each of the summands satisfy,
\begin{equation*}
\left|\cos\left(\left(\frac{d-1}{4}\right)\pi - \ell q \xi\right)\right| \ge C\:.
\end{equation*}
And hence,
\begin{equation*}
\left|S\left(\frac{2\pi}{q},\theta \right)\right| \gtrsim \sqrt{q} \times \frac{N}{q} -\sqrt{N} \gtrsim N^{3/4}\:.
\end{equation*}
On the other hand, is $\X=P^d(\R)$, $d$ odd, then again as $\ell q \xi < \pi/8$, the cosines in the sum are of the same sign and satisfy,
\begin{equation*}
\left|\cos\left(\left(\frac{d-1}{4}\right)\pi - \ell q \xi\right)\right| \ge C \begin{cases}
	 \ell q \xi  &\text{ if }  d \equiv 3 \text{ or } 7 \text{ (mod }8)\:, \\
	1\: &\text{ if }  d \equiv 1 \text{ or } 5 \text{ (mod }8)\:.
	\end{cases}
\end{equation*}
Thus for $\xi \in \left(\frac{\pi}{16N},\:\frac{\pi}{8N}\right)$, we again get
\begin{equation*}
\left|S\left(\frac{2\pi}{q},\theta \right)\right| \gtrsim  N^{3/4}\:.
\end{equation*}
Hence, in either case, plugging the above estimate in (\ref{result3_pf_eq2}), we get that in particular for $\theta \in E_N$,
\begin{equation*} 
\sup_{0\le t <2\pi}\left| e^{it\Delta} f_N(\theta)\right| \gtrsim N^{3/4}\:,
\end{equation*}
that is, we get (\ref{result3_pf_eq1}), which completes the proof for the Schr\"odinger equation.

\medskip

{\bf Step 2:} By the transference principle (Theorem \ref{transference_principle}), the results in Theorem \ref{result3} for the Boussinesq equation and the Beam equation follow from the result of the Schr\"odinger equation proved in Step $1$, in view of Remark \ref{examples_remark}. 

Indeed, for the sake of contradiction, let us assume that there exists $\alpha_0<1/4$ for which the maximal estimate holds for Beam or Boussinesq. Then the maximal estimate also holds for all $\alpha > \alpha_0$. By the transference principle (Theorem \ref{transference_principle}) and Remark \ref{examples_remark}, the same must also be true for the Schr\"odinger equation, in particular, for all $\alpha \in (\alpha_0,1/4)$ which contradicts the result obtained in Step 1. This completes the proof of Theorem \ref{result3}.
\end{proof}

\begin{remark} \label{remark_failure_result3}
Our number theoretic arguments crucially use the fact that $\sigma + \tau +1 \in \N$, so that one can use the $q$-periodicity to obtain the decomposition (\ref{result3_pf_eq3}). Clearly, this fails for $P^d(\R)$ with $d$ even.  
\end{remark}

\section{Open problems}
It would be interesting to pursue a complete solution to the Carleson's problem on Riemannian symmetric spaces of compact type. It seems to be a difficult task, however, given that it has not been achieved yet, even in the case of the circle $\T$. In this light, we pose the following problems:
\begin{enumerate}
\item What are possible analogues of Theorem \ref{result1}, for rank $\ge 3$?
\item Can Theorem \ref{result3}, that is, the failure of $\alpha <1/4$ be extended to $P^d(\R)$, for $d$ even?
\item Finally, can the results appearing in this article be improved?
\end{enumerate}

\section*{Acknowledgements} We are thankful to Giacomo Gigante for several illuminating comments and suggestions. We also thank Kummari Mallesham for discussions. The first author is supported by the Institute Post Doctoral Fellowship of Indian Institute of Technology, Bombay.

\bibliographystyle{amsplain}

\begin{thebibliography}{amsplain}
\bibitem[Bou92]{Bourgain1} Bourgain, J. {\em A remark on Schr\"odinger operators.} Israel J. Math. 77 (1992), 1-16.
\bibitem[Bou93]{Bourgain-GAFA} Bourgain, J. {\em Fourier transform restriction phenomena for certain lattice subsets and applications to nonlinear evolution equations.} Geom. Funct. Anal. 3 (1993), 107-156.
\bibitem[Bou16]{Bourgain} Bourgain, J. {\em A note on the Schr\"odinger maximal function.} J. Anal. Math. 130, 393-396 (2016).
\bibitem[BD15]{BD} Bourgain, J. and Demeter, C. {\em The proof of the $\ell^2$ decoupling conjecture.} Ann. Math. 182, no. 1 (2015): 351–89.
\bibitem[BGG25]{Gigante} Brandolini, L., Gariboldi, B. and Gigante, G. {\em Irregularities of distribution on two-point homogeneous spaces.} In: Hern\'andez, E., Peloso, M.M., Ricci, F., Soria, F., Tabacco, A. (eds) The Mathematical Heritage of Guido Weiss. Applied and Numerical Harmonic Analysis. Birkh\"auser, Cham (2025).
\bibitem[Car80]{C} Carleson, L. {\em Some analytic problems related to statistical mechanics, Euclidean harmonic analysis.} Lecture Notes in Math. 779, Springer, Berlin, 5-45 (1980).
\bibitem[CDLY22]{CDLY} Chen, X., Duong, X.T., Lee, S. and Yan, L. {\em A sharp regularity estimate for the Schr\"odinger propagator on the sphere.} J. Math. Pures Appl. 163 (2022), 433-449.
\bibitem[CT17]{CT} Colzani, L. and Tenconi, M. {\em 
Localization for Riesz means on the compact rank one symmetric spaces.} Proceedings of the AMSI/AustMS 2014 Workshop in Harmonic Analysis and its Applications, Austral. Nat. Univ., (47) 26–49,  2017.
\bibitem[CLS21]{CLS} Compaan, E., Luc\`a, R. and Staffilani, G. {\em Pointwise Convergence of the Schr\"odinger Flow.}
Int. Math. Res. Not., Vol. 2021, No. 1, pp. 599–650
\bibitem[Cow83]{Cowling} Cowling, M. {\em Pointwise behavior of solutions to Schr\"odinger equations, harmonic analysis.} Lecture Notes in Math. 992. Springer, Berlin, 83-90 (1983).
\bibitem[DK82]{DK} Dahlberg, B.E.J. and Kenig, C.E. {\em A note on the almost everywhere behavior of solutions of the Schr\"odinger equation.} Lecture Notes in Math. 908. Springer-Verlag, Berlin, 205-208 (1982).
\bibitem[DGL17]{DGL} Du, X., Guth, L. and Li, X. {\em A sharp Schr\"odinger maximal estimate in $\R^2$.} Ann. Math. 186, 607-640 (2017).
\bibitem[DZ19]{DZ} Du, X. and Zhang, R. {\em Sharp $L^2$ estimates of the Schr\"odinger maximal function in higher dimensions.} Ann. Math. 189, 837-861 (2019).
\bibitem[FG96]{FG} Fang, Y-F. and Grillakis, M. G. {\em Existence and uniqueness for Boussinesq type equations on a circle.} Communications in Partial Differential Equations, 21(7-8), 1253-1277 (1996).
\bibitem[GPW08]{GPW} Guo, Z., Peng, L. and Wang, B. {\em Decay estimates for a class of wave equations.} J. Funct. Anal. 254 (2008), 1642-1660.
\bibitem[HW75]{Hardy} Hardy, G.H. and Wright, E. M. {\em An introduction to the theory of numbers.} Fourth edition, Oxford University Press, 1975.
\bibitem[Hel78]{HelgasonDiff} Helgason, S. {\em Differential geometry, Lie groups and Symmetric spaces.} Pure and Applied Mathematics, Academic Press, Inc., New York-London, 1978.
\bibitem[Hel08]{HelSymm} Helgason, S. {\em Geometric Analysis on Symmetric Spaces.} 2nd ed. Mathematical Surveys and Monographs, vol. 39. Providence, RI: American Mathematical Society, 2008.
\bibitem[Hel00]{Helgason} Helgason, S. {\em Groups and geometric analysis, Integral geometry, invariant differential operators, and spherical functions}. Mathematical Surveys and Monographs, vol. 83. Providence, RI: American Mathematical Society, (2000).
%\bibitem[He99]{Helgason-b} Helgason, S. {\em The Radon transform.} Second edition. Progress in Mathematics, 5. Birkhäuser Boston, Inc., Boston, MA, 1999.
%\bibitem[H\"or68]{Hormander} H\"ormander, L. {\em The spectral function of an elliptic operator.} Acta Math. 121 (1968), 193-218.
\bibitem[Lee06]{Lee} Lee, S. {\em On pointwise convergence of the solutions to Schr\"odinger equations in $\R^2$.} Int. Math. Res. Not. Art. ID 32597, 1-21 (2006).
\bibitem[MV08]{MV} Moyua, A. and L. Vega. {\em Bounds for the maximal function associated to periodic solutions of one-dimensional dispersive equations.} Bull. Lond. Math. Soc. 40, no. 1 (2008): 117–128.
\bibitem[MVV96]{MVV} Moyua, A., Vargas, A. and Vega, L. {\em Schr\"odinger maximal function and restriction properties of the Fourier transform.} Int. Math. Res. Not. 16(1996), 793-815.
\bibitem[\'OP13]{Pasquale} \'Olaffson, G. and Pasquale, A. {\em Ramanujan’s Master Theorem for the Hypergeometric
Fourier Transform Associated with Root Systems.} J Fourier Anal Appl (2013) 19:1150–1183\:.
\bibitem[\'OS08]{OS} \'Olaffson, G. and Schlichtkrull, H. {\em A local Paley–Wiener theorem for compact symmetric spaces.} Advances in Mathematics 218 (2008) 202–215.
\bibitem[Pro07]{Pro} Procesi, C. {\em Lie groups. An approach through invariants and representations.} Universitext. Springer, New York, 2007. 
\bibitem[Sj\"o87]{Sjolin} Sj\"olin, P. {\em Regularity of solutions to the Schr\"odinger equation.} Duke Math J. 55, 699-715 (1987).
\bibitem[Sze75]{Szego} Szeg\"o, G. {\em Orthogonal Polynomials.} Fourth edition. American Mathematical Soc., Providence, Rhode Island, 1975.
\bibitem[TV00]{Tao} Tao, T. and Vargas, A. {\em A bilinear approach to cone multipliers. II. Applications.} Geom. Funct. Anal. 10(2000), 216-258.
\bibitem[Veg88]{Vega} Vega, L. {\em Schr\"odinger equations: pointwise convergence to the initial data.} Proc. Amer. Math. Soc. 102, 874-878 (1988).
\bibitem[Wan52]{Wang} Wang, H.-C. {\em Two-point homogeneous spaces.} Ann. Math. (2), 55 (1952), pp. 177-191.
\bibitem[WZ19]{WZ} Wang, X. and Zhang, C. {\em Pointwise Convergence of Solutions to the Schr\"odinger Equation on Manifolds}. Canad. J. Math. Vol. 71(4), 983-995 (2019).


\end{thebibliography}

\end{document}